\def\E{{\rm E}\,}
\crefname{hypothesis}{Hypothesis}{Hypotheses}
\title{The Global Active Subspace Method\thanks{Submitted to the editors DATE.
}}
\author{Ruilong Yue\thanks{Department of Mathematics, Florida State University, Tallahassee, FL 
  (\email{ryue@math.fsu.edu}).}
\and Giray \"{O}kten\thanks{Corresponding author. Department of Mathematics, Florida State University, Tallahassee, FL 
  (\email{okten@math.fsu.edu}).}}
\begin{document}

\maketitle

\begin{abstract}
We present a new dimension reduction method called the global active subspace method. The method uses expected values of finite differences of the underlying function to identify the important directions, and builds a surrogate model using the important directions on a lower dimensional subspace. The method is a generalization of the active subspace method which uses the gradient information of the function to construct a reduced model. We develop the error analysis for the global active subspace method, and present examples to compare it with the active subspace method numerically. The results show that the global active subspace method is more accurate, efficient, and robust with respect to noise or lack of smoothness in the underlying function.
\end{abstract}

\begin{keywords}
  Active subspace method, global sensitivity analysis, Sobol' sensitivity indices, dimension reduction, Monte Carlo.
\end{keywords}

\begin{AMS}
  65C05, 65C20, 65C60
\end{AMS}

\section{Introduction}
The active subspace (AS) method is a dimension reduction method introduced in Russi \cite{russi2010uncertainty} and Constantine et al. \cite{constantine2014active}, and further developed in Constantine \cite{constantine2015active} and Constantine and Gleich \cite{constantine2014computing}. The method has been applied successfully in many problems in fields of engineering, public health, quantitative finance, and geosciences (see Constantine et al. \cite{constantine2015exploiting}, Cui et al. \cite{cui2020active}, Demo et al. \cite{demo2021supervised}, Diaz et al. \cite{diaz2018modified}, Jefferson et al. \cite{jefferson2015active}, Kubler and Scheidegger\cite{kubler2019self}, Li et al. \cite{li2019surrogate}, Liu and Owen \cite{liu2022pre}, Lukaczyk et al. \cite{lukaczyk2014active}, Zhou and Peng \cite{zhou2021active}). The method uses the gradient information of a function to find the directions along which the function changes the most. The function is then approximated by one that depends only on the few important directions. Estimating the gradient accurately is crucial in successful application of the AS method, and several researchers introduced methods for this estimation, for example, Coleman et al. \cite{coleman2019gradient}, Constantine et al. \cite{constantine2015computing}, Lewis et al. \cite{lewis2016gradient}, Ma et al. \cite{ma2020support}, Navaneeth and Chakraborty \cite{navaneeth2022surrogate}, Wycoff et al. \cite{wycoff2021sequential}, and Yan et al. \cite{yan2021accelerating}.

In this paper we will introduce a generalization of the AS method, called the global active subspace method (GAS), that does not rely on gradients. Instead the method computes the expectations of first order finite-differences of function values, and uses this information to identify the important directions along which the function changes the most. The GAS method measures change in function values in a more global way by considering finite-differences as opposed to partial derivatives, and it has theoretical connections with Sobol' sensitivity indices (Sobol' \cite{sobol2001global}), a popular measure used in global sensitivity analysis (Saltelli et al. \cite{saltelli2008global}). Although the error analysis of GAS requires some smoothness on the function, we will demonstrate the robustness of the method numerically when the underlying function is noisy or lacks the necessary smoothness. 

The paper is organized as follows. Section \ref{sec:review} presents the definition of the GAS method, discusses some computational issues, and describes how to build a surrogate model using the GAS method. Section \ref{sec:alg} presents an algorithm for the method. The error analysis of the method is established in Sections \ref{sec:error} and \ref{sec:assump}. Several numerical examples are used to compare the accuracy and efficiency of the AS and GAS methods in Section \ref{sec:numerical}. We conclude in Section \ref{sec:conc}.

\section{Global Active Subspace Method}
\label{sec:review}
In this section we develop the theoretical foundation of the global active subspace (GAS) method, discuss computational issues and construction of surrogate models based on the important directions, and present its algorithm. 

In the theory of the global active subspace, we use tools from probability theory such as expectations, however, there is no inherent randomness in the functions or variables. This is the same interpretation used by Constantine et al. \cite{constantine2014active} for the active subspace method. The expectations in the paper should be viewed as a short-hand notation for Riemann-Stieltjes integrals, and the domain of our functions is $\pmb R^d$, equipped with a finite measure (in particular, probability measure) induced by cumulative distribution functions.     


\subsection{Derivation of GAS Method}
\label{sec:deriv}

Consider the $d$-dimensional real space $\pmb R^d$, equipped with the probability measure induced by a cumulative distribution function in the form $ \pmb F(\pmb z)=F_1(z_1)\cdot \ldots \cdot F_d(z_d)$, where $F_i$ are marginal cumulative distribution functions. Let $f$ be a real-valued function on $\Omega\subseteq\pmb R^d$.
Let $D_{\pmb z}$ be an operator, acting on $f$, whose value when evaluated at $(\pmb v, \pmb z)$ is given by: 
\[
D_{\pmb z}f(\pmb v,\pmb z)=[D_{\pmb z,1}f(v_{1},\pmb z),\ldots,D_{\pmb z,d}f(v_{d},\pmb z)]^T,
\] 
where
\begin{equation}
\label{def_D}
D_{\pmb z,i}f(v_{i},\pmb z)=(f(\pmb v_{\{i\}}{:}\pmb z_{-\{i\}})-f(\pmb z))/(v_{i}-z_{i}).
\end{equation}
The notation $(\pmb v_{\{i\}}{:}\pmb z_{-\{i\}})$ means the vector whose $i$th component is $v_i$, and whose $j$th component $(j\neq i)$ is $z_j$.
The notations $\pmb v_{\{i\}}$ and $v_i$ both mean the $i$th component of $\pmb v$. We use the former notation only when we are splicing and replacing a component from a vector with another.


Define the $d\times d$ matrix $\pmb C$ by
\begin{equation}
\label{def_C}
\pmb C=\E[\E[(D_{\pmb z}f)(D_{\pmb z}f)^T|\pmb z]], 
\end{equation}
where the inner conditional expectation fixes $\pmb z$ and averages over the components of $\pmb v=[v_1,\ldots,v_d]^T$, and the outside expectation averages with respect to $\pmb z$. Both $\pmb v$ and $\pmb z$ follow the same continuous probability distribution $\pmb F$. 

To get a better understanding of the matrix $\pmb C$ let us consider a simple example, a bivariate function $f(z_1,z_2)$. The value of vector $D_{\pmb z}f(\pmb v, \pmb z)$ is then
\begin{equation}
D_{\pmb z}f(\pmb v,\pmb z)=\left[\frac{f(v_1,z_2)-f(z_1,z_2)}{v_1-z_1},\frac{f(z_1,v_2)-f(z_1,z_2)}{v_2-z_2}\right]^T,
\end{equation}
and the matrix $\pmb C$ is
\begin{equation}
\label{C_two_dim}
\pmb C=\left[
\begin{array}{cc}
    C_{11}&C_{12}\\
    C_{21}&C_{22}
\end{array}
\right],
\end{equation}
where
\begin{equation}\nonumber
\begin{split}
C_{11}&=\int_{\pmb R^3}\frac{(f(v_1,z_2)-f(z_1,z_2))^2}{(v_1-z_1)^2}dF_1(v_1)d\pmb F(\pmb z),\\
C_{22}&=\int_{\pmb R^3}\frac{(f(z_1,v_2)-f(z_1,z_2))^2}{(v_2-z_2)^2}dF_2(v_2)d\pmb F(\pmb z),\\
C_{12}=C_{21}&=\int_{\pmb R^4}\frac{(f(v_1,z_2)-f(z_1,z_2))(f(z_1,v_2)-f(z_1,z_2))}{(v_1-z_1)(v_2-z_2)}dF_1(v_1)dF_2(v_2)d\pmb F(\pmb z).
\end{split}
\end{equation}

We next discuss the necessary conditions for the existence of the integrals in $\pmb C$, for the case of bivariate $f$ - the arguments generalize to higher dimensions in a straightforward way. Let $\tilde{F}(x)=\prod_{i \in S} F_i(x_i)$ where $S$ is a list of indices $i_1,\ldots, i_s$, $s\leq d$, and each $i_k$, $k=1,\ldots,s$, belongs to the set of indices $\{1,\ldots,d\}$. Then $\tilde{F}$ is an $s$-dimensional Stieltjes measure function (Kingman and Taylor \cite{kingman}). The corresponding measure $\mu_{\tilde{F}}$ takes the following value on $s$-dimensional intervals $[a,b)=[a_1,b_1)\times\ldots\times[a_s,b_s)$:
\[
\mu_{\tilde{F}}([a,b))=\prod_{k=1,\ldots,s} (F_{i_k}(b_{i_k})-F_{i_k}(a_{i_k})).
\]
Using standard measure theory techniques $\mu_{\tilde{F}}$ can be extended to Borel sets in $\pmb R^s$, whose completion is the Lebesgue-Stieltjes (LS) measurable sets in $\pmb R^s$.

Assume $f$ has continuous first-order partial derivatives. Consider the integral $C_{11}$, and define $g(v_1,z_1,z_2)$ as 
\[
g(v_1,z_1,z_2) = \begin{cases}
\left(\frac{f(v_1,z_2)-f(z_1,z_2)}{v_1-z_1}\right)^2 \text{if } v_1 \neq z_1,\\
\left(\frac{\partial f}{\partial z_1}\right)^2 \text{if } v_1 = z_1.
\end{cases}
\]
Now consider the Riemann-Stieltjes (RS) integrals
\[
\int_{ \pmb R^3\backslash \mathcal{B}_n} g(v_1,z_1,z_2) dF_1(v_1)dF_1(z_1)dF_2(z_2)
\]
where $\mathcal{B}_n=\{(v_1,z_1,z_2) \in \pmb R^3, \text{s.t. } |v_1-z_1|<1/n \},n>0$. A necessary condition for the existence of these integrals is that $g$, which is the same as $\left(\frac{f(v_1,z_2)-f(z_1,z_2)}{v_1-z_1}\right)^2$ on $\pmb R^3\backslash \mathcal{B}_n$, is bounded and continuous over $\pmb R^3\backslash \mathcal{B}_n$ for all $n>0$. Note that $g$ is continuous if $f$ is continuous over this domain.

The integral
\[
C_{11}=\int_{ \pmb R^3} \left(\frac{f(v_1,z_2)-f(z_1,z_2)}{v_1-z_1}\right)^2 dF_1(v_1)dF_1(z_1)dF_2(z_2)
\]
is defined as the limit of RS-integrals
\[
\lim_{n\rightarrow \infty} \int_{ \pmb R^3\backslash \mathcal{B}_n} g(v_1,z_1,z_2) dF_1(v_1)dF_1(z_1)dF_2(z_2).
\]
The above limit exists since the sequence defined by
\[
I_n=\int_{ \pmb R^3\backslash \mathcal{B}_n} g(v_1,z_1,z_2) dF_1(v_1)dF_1(z_1)dF_2(z_2)
\]
is an increasing sequence bounded by $\int_{ \pmb R^3} g d\tilde{F}$, where $\tilde{F}=F_1\times F_1 \times F_2$. If $g$ is bounded on $\pmb R^3$ with $g\leq C$, and $F_i$ $(i=1,2)$ are continuous functions, then 
\[
\int_{ \mathcal{B}_n} g d\tilde{F} < C \mu_{\tilde{F}}( \mathcal{B}_n) \rightarrow 0
\]
as $n\rightarrow \infty$, since in the limit $\mathcal{B}_n$ becomes a lower dimensional subspace of $\pmb R^3$. As a consequence, $C_{11}=\int_{ \pmb R^3} g d\tilde{F}$.

%
%

Now let's the consider the integral $C_{12}$, and define $g$ as
\[
g(v_1,v_2,z_1,z_2)=\frac{(f(v_1,z_2)-f(z_1,z_2))(f(z_1,v_2)-f(z_1,z_2))}{(v_1-z_1)(v_2-z_2)},
\]
if $v_1 \neq z_1$ and $v_2 \neq z_2$. If $v_1 = z_1$, then the corresponding term $\frac{f(v_1,z_2)-f(z_1,z_2)}{v_1-z_1}$ is replaced by $\frac{\partial f}{\partial z_1}$. The case $v_2 = z_2$ is similar.

The integral 
\[
C_{12}=\int_{ \pmb R^4} g(v_1,v_2,z_1,z_2) dF_1(v_1)dF_2(v_2)dF_1(z_1)dF_2(z_2)
\]
is defined as the limit of RS-integrals
\[
\lim_{n\rightarrow \infty} \int_{ \pmb R^4\backslash \mathcal{B}_n} g(v_1,v_2,z_1,z_2) dF_1(v_1)dF_2(v_2)dF_1(z_1)dF_2(z_2),
\]
where $\mathcal{B}_n$ is the union of the sets $\{(v_1,v_2,z_1,z_2) \in \pmb R^4, \text{s.t. } |v_1-z_1|<1/n \}$ and $\{(v_1,v_2,z_1,z_2) \in \pmb R^4, \text{s.t. } |v_2-z_2|<1/n \}$. The RS-integrals above exist provided $f$ is continuous and bounded over $\pmb R^4\backslash \mathcal{B}_n$ for all $n>0$.

%
%

Let
\[
J_n=\int_{ \pmb R^4\backslash \mathcal{B}_n} |g(v_1,v_2,z_1,z_2)| dF_1(v_1)dF_2(v_2)dF_1(z_1)dF_2(z_2).
\]
Note that $J_n$ is an increasing sequence bounded by $\int_{ \pmb R^4\backslash \mathcal{B}_n} |g| d\tilde{F}$ ($\tilde{F}=F_1\times F_2 \times F_1 \times F_2$), and thus its limit exists. Observe that 
\[
0\leq |g(v_1,v_2,z_1,z_2)|-g(v_1,v_2,z_1,z_2) \leq 2 |g(v_1,v_2,z_1,z_2)|,
\]
which implies
\[
\lim_{n\rightarrow \infty} \int_{ \pmb R^4\backslash \mathcal{B}_n} ( |g(v_1,v_2,z_1,z_2)|-g(v_1,v_2,z_1,z_2))dF_1(v_1)dF_2(v_2)dF_1(z_1)dF_2(z_2)
\]
exists since the sequence is increasing and bounded by the constant $2\int_{ \pmb R^4\backslash \mathcal{B}_n} |g| d\tilde{F}$. Since the limits of integrals of $|g(v_1,v_2,z_1,z_2)|-g(v_1,v_2,z_1,z_2)$ and $|g(v_1,v_2,z_1,z_2)|$ both exist, the limit
\[
C_{12}=\lim_{n\rightarrow \infty} \int_{ \pmb R^4\backslash \mathcal{B}_n} g(v_1,v_2,z_1,z_2) dF_1(v_1)dF_2(v_2)dF_1(z_1)dF_2(z_2)
\]
exists as well. Finally, if we assume $F_i$ are continuous ($i=1,2$) in addition to boundedness of $g$, then
\[
C_{12}= \int_{ \pmb R^4} g(v_1,v_2,z_1,z_2) dF_1(v_1)dF_2(v_2)dF_1(z_1)dF_2(z_2)
\] 
since $\int_{ \mathcal{B}_n} g d\tilde{F}\rightarrow 0$ as $n\rightarrow \infty$, like in the case of $C_{11}$.

\begin{remark}
Another approach we could take was to define the matrix $\pmb C$ in such a way that the finite differences are replaced by the derivatives, just like the functions $g(\cdot)$ introduced earlier, and $C$ is defined in terms of the expectations of $g(\cdot)$, as opposed to finite differences of $f$. The quadrature rule we will use to estimate these expectations does not include nodes that lead to $v_i=z_i$, thus we do not have any numerical issues related to singularities in the finite differences. Also note that the integrals in the diagonal of $\pmb C$ resemble the non-local gradient operators discussed in Du and Tian \cite{du} (page 1540).
\end{remark}
The matrix $\pmb C$ provides a global measure of change for the function $f$, and it resembles upper Sobol' sensitivity indices (Sobol' \cite{sobol2001global}), which is a popular measure used in global sensitivity analysis. Recall that the first-order upper Sobol' sensitivity indices can be written as (Saltelli et al. \cite{saltelli2010variance})
\begin{equation}
\Bar{S}_i=\frac{1}{2\sigma^2}\int{(f(\pmb v_{\{i\}}{:}\pmb z_{-\{i\}})-f(\pmb z))^2d\pmb F(\pmb z)dF_i(v_i)},
\end{equation}
where $\sigma^2$ is the variance of $f$. If we did not divide $f(\pmb v_{\{i\}}{:}\pmb z_{-\{i\}})-f(\pmb z)$ by $v_{i}-z_{i}$ in the definition of $D_{\pmb z}f(\pmb z)$ (Eqn. \ref{def_D}), then the diagonal elements of $\frac{1}{2\sigma^2}\pmb C$ would be exactly the upper Sobol' indices $\Bar{S}_1,\ldots,\Bar{S}_d$. 

The active subspace method also constructs a matrix $\pmb C$, where the matrix is obtained from the gradient of $f$. The global active subspace is based on finite-differences, not their limits, and the accuracy and robustness of these methods in problems where the gradient does not exist or difficult to estimate accurately will be explored in the numerical results. 


Like in the active subspace method, we will use the eigenvalue decomposition of $\pmb C$ to find the important variables, or directions, of $f$. As $\pmb C$ is symmetric and positive semi-definite, there exists a real eigenvalue decomposition, and the eigenvectors are mutually orthogonal. To this end, consider the eigenvalue decomposition of $\pmb C$:
\begin{equation}
\pmb C=\pmb U\Lambda \pmb U^T,\Lambda=diag(\lambda_1,\ldots,\lambda_d), \lambda_1\geq\ldots\geq \lambda_d\geq 0.
\end{equation}
Partition the eigenvalues and eigenvectors as follows: 
\begin{equation}
\Lambda=\left[
\begin{array}{cc}
    \Lambda_1 &  \\
     & \Lambda_2
\end{array}
\right],   
\pmb U=\left[
\begin{array}{cc}
    \pmb{U}_1 &  \pmb{U}_2
\end{array}
\right],
\end{equation}
where $\Lambda_1=diag(\lambda_1,\ldots,\lambda_{d_1})$ with $d_1<d$, and $\pmb{U}_1\in\pmb R^{d\times d_1}$. Introduce new variables (rotated coordinates), $\pmb{w}_1, \pmb{w}_2$ as:
\begin{equation}
\pmb{w}_1=\pmb{U}_1^T\pmb z\in\pmb R^{d_1}, \pmb{w}_2=\pmb{U}_2^T\pmb z\in\pmb R^{d-d_1}.
\end{equation}

The vector $\pmb{w}_1$, obtained through eigenvectors corresponding to larger eigenvalues, holds the information about the important directions for the function $f$. These are the directions along which the function changes the most. We will call the subspace generated by directions in $\pmb{w}_1$, the \textit{global active subspace}. Directions in the other vector $\pmb{w}_2$ are unimportant; the function does not change as much along those directions. The following lemma links the changes in function values to the eigenvalues of $\pmb C$.

\begin{lemma}\label{EDD}
For a square-integrable, Lipchitz continuous function $f(\pmb z)$ with finite second-order partial derivatives, we have
\begin{align*}
\E[\E[(D_{\pmb{w}_1}f)^T(D_{\pmb{w}_1}f)|\pmb{w}_1]]=\lambda_{1}+\ldots+\lambda_{d_1}+C_{U_1},\\
\E[\E[(D_{\pmb{w}_2}f)^T(D_{\pmb{w}_2}f)|\pmb{w}_2]]=\lambda_{d_1+1}+\ldots+\lambda_{d}+C_{U_2},
\end{align*}
where $C_{U_1},C_{U_2}$ are two constants related to the matrix $\pmb U$ and the second order derivatives of $f$. 
\end{lemma}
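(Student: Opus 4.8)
The plan is to reduce the statement to the exact identity that holds for the active subspace method, by applying a first-order Taylor expansion to the finite-difference operator and absorbing the resulting second-order corrections into the constants $C_{U_1}$ and $C_{U_2}$. First I would pass to the rotated function $\tilde f(\pmb w)=f(\pmb U\pmb w)$, where $\pmb w=\pmb U^T\pmb z=(\pmb w_1,\pmb w_2)$, and observe that by the tower property the two left-hand sides are the full expectations $\E[(D_{\pmb w_1}f)^T(D_{\pmb w_1}f)]$ and $\E[(D_{\pmb w_2}f)^T(D_{\pmb w_2}f)]$, taken over the base point $\pmb w$ and the one-coordinate perturbations. Since $f$ has finite second-order partial derivatives, Taylor's theorem applied one coordinate at a time gives $D_{\pmb w,i}\tilde f=(\nabla_{\pmb w}\tilde f)_i+\tilde r_i$ with remainder $\tilde r_i=\tfrac12\,\partial^2_{w_i}\tilde f(\eta_i)(u_i-w_i)$ for an intermediate point $\eta_i$, and the same expansion in the original coordinates gives $D_{\pmb z}f=\nabla_{\pmb z}f+\pmb r$. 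Because $\nabla^2_{\pmb w}\tilde f=\pmb U^T(\nabla^2_{\pmb z}f)\pmb U$, finiteness of the second derivatives of $f$ transfers to $\tilde f$, so each remainder is controlled.

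Next I would isolate the leading term. Squaring and summing over the active coordinates, $(D_{\pmb w_1}f)^T(D_{\pmb w_1}f)=\|\nabla_{\pmb w_1}\tilde f\|^2+2(\nabla_{\pmb w_1}\tilde f)^T\tilde{\pmb r}_{(1)}+\|\tilde{\pmb r}_{(1)}\|^2$, where $\tilde{\pmb r}_{(1)}$ collects the first $d_1$ remainders. The chain rule $\nabla_{\pmb w}\tilde f=\pmb U^T\nabla_{\pmb z}f$ is exact, so $\|\nabla_{\pmb w_1}\tilde f\|^2=(\nabla_{\pmb z}f)^T\pmb U_1\pmb U_1^T\nabla_{\pmb z}f$, and taking expectations gives $\E[\|\nabla_{\pmb w_1}\tilde f\|^2]=\mathrm{tr}(\pmb U_1^T\pmb C_{\mathrm{AS}}\pmb U_1)$, where $\pmb C_{\mathrm{AS}}=\E[\nabla f(\nabla f)^T]$ is the active-subspace matrix.

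The key algebraic step is to trade $\pmb C_{\mathrm{AS}}$ for $\pmb C$. Expanding $\pmb C=\E[(\nabla f+\pmb r)(\nabla f+\pmb r)^T]=\pmb C_{\mathrm{AS}}+\pmb E$ with $\pmb E=\E[\nabla f\,\pmb r^T+\pmb r\,(\nabla f)^T+\pmb r\,\pmb r^T]$, I would use the eigen-decomposition $\pmb C=\pmb U\Lambda\pmb U^T$ and the orthonormality of the columns of $\pmb U$, which gives $\pmb U_1^T\pmb C\pmb U_1=\Lambda_1$, to obtain $\mathrm{tr}(\pmb U_1^T\pmb C\pmb U_1)=\lambda_1+\cdots+\lambda_{d_1}$. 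Hence $\mathrm{tr}(\pmb U_1^T\pmb C_{\mathrm{AS}}\pmb U_1)=\lambda_1+\cdots+\lambda_{d_1}-\mathrm{tr}(\pmb U_1^T\pmb E\pmb U_1)$, and collecting every second-derivative contribution into $C_{U_1}:=-\mathrm{tr}(\pmb U_1^T\pmb E\pmb U_1)+\E[\,2(\nabla_{\pmb w_1}\tilde f)^T\tilde{\pmb r}_{(1)}+\|\tilde{\pmb r}_{(1)}\|^2\,]$ yields the first identity. The second identity follows verbatim with $\pmb U_2$, $\Lambda_2$, and $\lambda_{d_1+1},\ldots,\lambda_d$ replacing their index-$1$ counterparts.

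The main obstacle is not the linear algebra but confirming that $C_{U_1}$ and $C_{U_2}$ are genuinely finite. Here the Lipschitz hypothesis does the essential work: since $\pmb U$ is orthogonal, $\tilde f$ is Lipschitz with the same constant, so each $D_{\pmb w,i}\tilde f$ and each $(\nabla_{\pmb w}\tilde f)_i$ is bounded, and therefore so are the remainders $\tilde r_i$, the cross terms, and the entries of $\pmb E$; boundedness against the finite measure $\pmb F$ makes every term integrable. The finite second-order derivatives are what license the pointwise splitting $D_{\pmb w,i}\tilde f=(\nabla_{\pmb w}\tilde f)_i+\tilde r_i$ off the diagonal $u_i=w_i$, while the diagonal contributes nothing by the $\mathcal B_n$ exhaustion argument already used for the entries of $\pmb C$. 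Verifying these integrability points carefully is the crux; once they are in hand, the separation into a leading eigenvalue sum plus a constant reduces to the exact active-subspace computation.
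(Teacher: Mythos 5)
Your proof is correct and rests on the same core mechanism as the paper's: a first-order Taylor expansion of the divided differences, the cyclic property of the trace, and the identity $\mathrm{tr}(\pmb U_1^T\pmb C\pmb U_1)=\lambda_1+\cdots+\lambda_{d_1}$, with every second-derivative contribution swept into the constant. The one genuine difference is how you organize the remainder. The paper writes $D_{\pmb{w}_1}f=\pmb{U}_1^T D_{\pmb z}f+\pmb a$ with $\pmb a=R_1(\pmb{U}_1^T(\pmb{z}-\pmb{v}))-\pmb{U}_1^TR_1(\pmb{z}-\pmb{v})$, so the leading term is already $\pmb U_1^T(D_{\pmb z}f)(D_{\pmb z}f)^T\pmb U_1$, the eigenvalue sum drops out of the GAS matrix $\pmb C$ in one step, and there is a single correction $C_{U_1}=\mathrm{tr}(\E[2\pmb U_1^T(D_{\pmb z}f)\pmb a^T+\pmb a\pmb a^T])$. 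You instead take the gradient as the leading object, land on the active-subspace matrix $\pmb C_{\mathrm{AS}}=\E[\nabla f(\nabla f)^T]$, and then need a second decomposition $\pmb C=\pmb C_{\mathrm{AS}}+\pmb E$ to convert back to the eigenvalues of the GAS matrix. Both routes necessarily produce the same constant, since each equals the left-hand side minus the eigenvalue sum; yours makes the relationship to the corresponding active-subspace result of \cite{constantine2014active} more explicit at the cost of carrying two correction terms instead of one, and your care about why $C_{U_1}$ is actually finite (the Lipschitz bound on the divided differences controlling the remainders and the entries of $\pmb E$) addresses an integrability point that the paper's own proof passes over silently.
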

\begin{proof}
We have 
\begin{equation}
    f(\pmb z)=f(\pmb{U}\pmb{U}^T\pmb z)=f(\pmb{U}_1\pmb{U}_1^T\pmb z+\pmb{U}_2\pmb{U}_2^T\pmb z)=f(\pmb{U}_1\pmb{w}_1+\pmb{U}_2\pmb{w}_2).
\end{equation}
We then derive the following equations:
\begin{equation}\label{equ:210}
\begin{split}
&D_{\pmb z}f=\nabla_{\pmb z}f(\pmb z)+R_1(\pmb{z}-\pmb{v}),\\
&D_{\pmb{w}_1}f=\pmb{U}_1^T\nabla_{\pmb{z}}f(\pmb{z})+R_1(\pmb{U}_1^T(\pmb{z}-\pmb{v})),
\end{split}
\end{equation}
where $R_1(\pmb{z}-\pmb{v})$ and $R_1(\pmb{U}_1^T(\pmb{z}-\pmb{v}))$ are two vectors obtained from the remainder terms of Taylor expansions.

To derive the first equation of Eqn. (\ref{equ:210}), we write the Taylor expansion for $f(\pmb v_{\{i\}}{:}\pmb z_{-\{i\}})$ at the point $\pmb z$ for each $i$, then approximate each entry in the vector $D_{\pmb z}f$ with the Taylor expansion, and summarize the result in the vector form. To prove the second line of Eqn. (\ref{equ:210}), we write the Taylor expansion for each entry in the vector $D_{\pmb{w}_1}f$, and then rewrite the expression using $\pmb v$ and $\pmb z$ to get the result.

Then 
\begin{align*}\label{proof1}
&\E[\E[(D_{\pmb{w}_1}f)^T(D_{\pmb{w}_1}f)|\pmb{w}_1]]=\E[\E[trace((D_{\pmb{w}_1}f)(D_{\pmb{w}_1}f)^T)|\pmb{w}_1]]\\
&= trace(\E[\E[(D_{\pmb{w}_1}f)(D_{\pmb{w}_1}f)^T|\pmb{z}]])\\
&= trace(\E[\E[\pmb{U}_1^T(D_{\pmb{z}}f)(D_{\pmb{z}}f)^T\pmb{U}_1|\pmb{z}]])+trace(\E[2\pmb{U}_1^T(D_{\pmb{z}}f)\pmb a^T+\pmb{aa}^T])\\
&= \lambda_1+\ldots+\lambda_{d_1}+C_{U_1},
\end{align*}

where 
\begin{align*}
D_{\pmb{w}_1}f&=\pmb{U}_1^TD_{\pmb{z}}f+\pmb a\\
\pmb a&=R_1(\pmb{U}_1^T(\pmb{z}-\pmb{v}))-\pmb{U}_1^TR_1(\pmb{z}-\pmb{v})\\
C_{U_1}&=trace(\E[2\pmb{U}_1^T(D_{\pmb{z}}f)\pmb a^T+\pmb{aa}^T]).
\end{align*}
The second equation of the lemma can be derived similarly, and we get 
\begin{align*}
\pmb a'&=R_1(\pmb{U}_2^T(\pmb{z}-\pmb{v}))-\pmb{U}_2^TR_1(\pmb{z}-\pmb{v})\\
C_{U_2}&=trace(\E[2\pmb{U}_2^T(D_{\pmb{z}}f)\pmb a'^T+\pmb a'\pmb a'^T]).
\end{align*}

\end{proof}

\begin{remark}\label{rmk:partition}
It is interesting to compare Lemma \ref{EDD} to a similar result for the active subspace method (Lemma 2.2 of \cite{constantine2014active}). In the active subspace method, there are no constant terms in the right-hand sides of the expectations. These constants appear as we generalize the approach from gradients to first-order divided differences. Lemma \ref{EDD} shows that when $\lambda_{d_1+1}+\ldots+\lambda_{d}+C_{U_2}$ is small, the changes for function $f$ along the directions in $\pmb{w}_2$ are also small, and thus we can use the directions in $\pmb{w}_1$ to construct a reduced dimensional approximation of $f$. In the active subspace method, the important directions can be identified from the eigenvalues directly, however, in the global active subspace method, we need information about the constants $C_{U_1}$ and $C_{U_2}$ to decide which expectation in Lemma \ref{EDD} is small. If we knew the constants are similar in magnitude and much smaller than the sum of the eigenvalues, we could continue like in the active subspace method and decide on the value of $d_1$ based on the eigenvalues only. Otherwise we have to estimate the expectations in Lemma \ref{EDD} directly, to identify the important directions. A more detailed description of this estimation and the determination of $d_1$ is given in Section \ref{sec:alg}.



\end{remark}

After the global active subspace is determined, the next task is to approximate $f$ by a function that only depends on the variables in the global active subspace, $\pmb{w}_1$. This is done in a similar way to the active subspace method, by integrating out the ``unimportant" variables $\pmb{w}_2$ using conditional expectations. 
Let $\rho$ be the probability density function that corresponds to the distribution function $\pmb F$. Let the joint probability density of $\pmb{w}_1,\pmb{w}_2$ to be $\pi(\pmb{w}_1,\pmb{w}_2):=\rho({\pmb{U}}_1\pmb{w}_1+{\pmb{U}}_2\pmb{w}_2)$, and the conditional probability density of $\pmb{w}_2$ given $\pmb{w}_1$ to be $\pi_{\pmb{w}_2|\pmb{w}_1}$, then 
\begin{equation}
\label{equ:condi}
f(\pmb z)\approx F_{gas}(\pmb z)=G(\pmb{w}_1)=\int{f({\pmb{U}}_1\pmb{w}_1+{\pmb{U}}_2\pmb{u})\pi_{\pmb{w}_2|\pmb{w}_1}}(\pmb u|\pmb{w}_1)d\pmb u.    
\end{equation}
  
Note that when $\pmb z\sim \mathcal{N}(\pmb 0,\pmb{I_d})$, we have $\pi_{\pmb{w}_2|\pmb{w}_1}=\pi_{\pmb{w}_2}$. For this convenience we prefer to use normal distribution when sampling from $\pmb z$ in the numerical results. 
 
\subsection{Computational Issues} \label{sec:comiss}
To estimate $\pmb C$, we use the Monte Carlo method:
\begin{equation}\label{gMC_C}
\hat{\pmb C}=\frac1 {M_1M_2}\sum_{i=1}^{M_1}\sum_{j=1}^{M_2}(D_{\pmb z^{(i)}}f(\pmb v^{(i,j)}, \pmb z^{(i)}))(D_{\pmb z^{(i)}}f(\pmb v^{(i,j)}, \pmb z^{(i)}))^T,    
\end{equation}
where $\pmb z^{(i)}$'s and $\pmb v^{(i,j)}$'s are sampled independently from their corresponding distributions. Then we compute the eigenvalue decomposition
\begin{equation}\label{g2p9}
\hat{\pmb C}=\hat{\pmb U}\hat{\Lambda}\hat{\pmb U}^T.    
\end{equation}

An alternative way to obtain $\hat{\pmb U}$ is by computing the singular value decomposition (SVD):
\begin{align}\label{equ:svd}
\begin{aligned}
\hat{\pmb B}=\frac1{\sqrt{M_1M_2}}[&D_{\pmb z^{(1)}}f(\pmb v^{(1,1)}, \pmb z^{(1)}),\ldots,D_{\pmb z^{(1)}}f(\pmb v^{(1,M_2)}, \pmb z^{(1)}),\\
&D_{\pmb z^{(2)}}f(\pmb v^{(2,1)}, \pmb z^{(2)}),\ldots,D_{\pmb z^{(2)}}f(\pmb v^{(2,M_2)}, \pmb z^{(2)}),\\
&\ldots\\
&D_{\pmb z^{(M_1)}}f(\pmb v^{(M_1,1)}, \pmb z^{(M_1)}),\ldots,D_{\pmb z^{(M_1)}}f(\pmb v^{(M_1,M_2)}, \pmb z^{(M_1)})]\\
=\hat{\pmb U}\sqrt{\hat{\pmb\Lambda}}\hat{\pmb V}^T.&   
\end{aligned} 
\end{align}

Note that here $\hat{\pmb B}$ is a $d\times M_1M_2$ matrix satisfying $\hat{\pmb B}\hat{\pmb B}^T=\hat{\pmb C}$. For each $\pmb z^{(i)})$, there are $M_2$ vectors shown as $M_2$ columns in $\hat{\pmb B}$. 

Using $\hat{\pmb U}$ we compute estimates for $\pmb{w}_1$, $\pmb{w}_2$, and their probability density functions: $\hat{\pmb{w}}_1=\hat{\pmb{U}}^T_1\pmb z, \hat{\pmb{w}}_2=\hat{\pmb{U}}^T_2\pmb z$, $\hat\pi(\hat{\pmb{w}}_1,\hat{\pmb{w}}_2)=\rho(\hat{\pmb{U}}_1\hat{\pmb{w}}_1+\hat{\pmb{U}}_2\hat{\pmb{w}}_2)$ and $\hat\pi_{\hat{\pmb{W}}_2|\hat{\pmb{W}}_1}$. We approximate $f(\pmb z)$ by
\begin{equation}\label{equ:estcondi}
f(\pmb z)\approx \hat F_{gas}(\pmb z)=\hat G(\hat{\pmb{w}}_1)=\int{f(\hat{\pmb{U}}_1\hat{\pmb{w}}_1+\hat{\pmb{U}}_2\pmb{u})\hat\pi_{\hat{\pmb{W}}_2|\hat{\pmb{W}}_1}}(\pmb u|\hat{\pmb{w}}_1)d\pmb u.    
\end{equation}
Monte Carlo is used to estimate the integral in Eqn. (\ref{equ:estcondi}):
\begin{equation}\label{gGhat}
\hat{F}_{mc}(\pmb{z})=\hat{G}_{mc}(\hat{\pmb{w}}_1)=\frac 1 {N_1}\sum_{i=1}^{N_1}f(\hat{\pmb{U}}_1\hat{\pmb{w}}_1+\hat{\pmb{U}}_2\pmb{\hat u}^{(i)}),   
\end{equation}
where $\pmb{\hat u}^{(i)}$'s are sampled from $\hat\pi_{\hat{\pmb{W}}_2|\hat{\pmb{W}}_1}$.

\subsection{Surrogate Model Construction}
We started with $f(\pmb z)$, discovered the global active subspace directions $\pmb{w}_1$, and approximated $f(\pmb z)$ with $\hat F_{gas}(\pmb z)=\hat G(\hat{\pmb{w}}_1)$ via Eqn. (\ref{equ:estcondi}). The next step is to replace $\hat F_{gas}(\pmb z)=\hat G(\hat{\pmb{w}}_1)$ by a surrogate model that will enable its efficient computation. Among several choices in the literature, we use polynomial chaos expansion (PCE) (Xiu and Karniadakis \cite{xiu2002wiener}) for this purpose. 

The PCE of a square-integrable variable $Y=f(\pmb w_1)$ is,
$$f(\pmb w_1)=\sum_{i=0}^{\infty}{k_i\phi_i(\pmb w_1)}.$$
Here $\{\phi_i\}_{i=0}^{\infty}$ is a family of multivariate orthonormal polynomials with respect to a weight function. Xiu and Karniadakis \cite{xiu2002wiener} introduced the Askey scheme for choosing the orthonormal polynomials. They conjectured that it is optimal to choose the family of orthonormal polynomials so that the weight function corresponds to the probability density function of $\pmb w_1$. If $\pmb w_1$ has the uniform distribution on $[0,1]^d$, we pick multivariate Legendre polynomials as the basis polynomials. If $\pmb w_1$ follows the multivariate normal distribution, the multivariate Hermite polynomials are used.

In practice, one needs to estimate the coefficients $k_i$, and compute 
\begin{align}
\label{eq_est_pce}
\hat f_p(\pmb w_1)=\sum_{i=0}^{P-1}{\hat k_i\phi_i(\pmb w_1)},
\end{align}
where $P=\binom{d+p} d$ is the number of terms in the summation, and $\hat k_i$ is the estimated value for $k_i$. A popular method in the literature to estimate $\hat k_i$ is the least squares method. Let $\pmb{\Psi}\in\pmb{R}^{N\times P}$ be the coefficient matrix with $\Psi_{ni}=\phi_i(\pmb w_1^{(n)})$, and $\pmb{y}\in\pmb{R}^{N}$ be the response vector with $y_n=f(\pmb w_1^{(n)})$. The least squares approach computes 
\begin{equation}\label{LS}
\hat{\pmb{k}}=(\pmb{\Psi}^T\pmb{\Psi})^{-1}\pmb{\Psi}^T\pmb{y},
\end{equation}
which minimizes the mean square error $||\pmb{y}-\pmb{\Psi}\pmb{k}||^2$. 

Let $\hat{F}_{sgt}$ denote the surrogate model approximation to $\hat F_{gas}(\pmb w_1)$, or more accurately, to $\hat F_{mc}$ (Eqn. (\ref{gGhat})). We will assume that $\hat{F}_{sgt}$ can be made arbitrarily close to $\hat F_{mc}$ in the following sense.

\begin{assumption}
\label{sur_assump}
Suppose that there exists $\kappa_1>0$, such that the surrogate model satisfies
\begin{equation}\label{equ:sgt}
\E[(\hat{F}_{sgt}-\hat F_{mc})^2]^{\frac12}\leq \kappa_1.
\end{equation}
\end{assumption}

In our numerical results we use PCE with the least squares method to construct $\hat{F}_{sgt}$, and in that case a bound similar to Assumption \ref{sur_assump} holds with a large probability when $N$ and $p$ are large (see Hampton and Doostan \cite{hampton2015coherence} for details). The deterministic error bound in the assumption allows us to combine several deterministic error bounds together to derive an error bound for $\E[(\hat F_{sgt}-f)^2]^{\frac12}$ in Theorem \ref{error4} of Section \ref{sec:error}.


\subsection{The GAS Algorithm}
\label{sec:alg}

Algorithm \ref{alg:gas} is a summary of steps that starts with the function $f(\pmb z)$, finds the global active subspace, computes the surrogate model via PCE, and returns $\E[f(\pmb z)]$. We show the algorithm for Gaussian distributed inputs, non-Gaussian distributed inputs can be converted to Gaussian distributed inputs by using their corresponding cumulative distribution functions, following the similar way as what Remark \ref{rem_shiftedSobol} discussed later. There is nothing special about the output, $\E[f(\pmb z)]$, except that in the numerical results we will be comparing different methods by their root mean square error when they estimate $\E[f(\pmb z)]$. A computer code for this algorithm is available at: \url{https://github.com/RuilongYue/global-active-subspace}. 

As discussed in Section \ref{sec:deriv}, we assume $\pmb z$ follows $\mathcal{N}(\pmb 0,\pmb{I_d})$ so that we can obtain $\pi(\pmb{w}_2|\pmb{w}_1)$ easily. If $\pmb z$ follows another distribution, we can apply transformation techniques to change the problem to one with normal distribution.

\begin{algorithm}[h]
\caption{GAS\_PCE Algorithm}
\label{alg:gas}
\begin{algorithmic}
\STATE{\hspace*{\algorithmicindent} \textbf{Input:} $f(\pmb z)$ with $\pmb z\sim \mathcal{N}(\pmb 0,\pmb{I_d})$, $K, M_1, M_2, N, N_1, p$}
\STATE{\hspace*{\algorithmicindent} \textbf{Output:} an estimate for $\E[f(\pmb z)]$}
\STATE{Generate a random sample of size $M_1$ from $\mathcal{N}(\pmb 0,\pmb{I_d}): \pmb z^{(1)},\ldots,\pmb z^{(M_1)}$}
\STATE{Initialize $\hat{\pmb B}$ as a $d\times M_1M_2$ empty matrix}

\FOR{$i=1,\ldots,d$} 
\STATE{Use a shifted Sobol' sequence to generate the sample $\pmb v^{(l,1)},\ldots,\pmb v^{(l,M_2)}$ from $\mathcal{N}(\pmb 0,\pmb{I_d})$ (see Remark \ref{rem_shiftedSobol} for details)}
\STATE{Update $\hat{\pmb B}$ with Eqn. (\ref{equ:svd})}
\ENDFOR

\STATE{$\hat{\pmb B}=\frac1{\sqrt{M_1M_2}}\hat{\pmb B}$} 
\STATE{Compute $\hat{\pmb B}=\hat{\pmb U}\hat{\pmb \Lambda}^{\frac1 2}\hat{\pmb V}^T$}
\STATE{Choose the appropriate $d_1$, and separate $\hat{\pmb U}=[\hat{\pmb{U}}_1,\hat{\pmb{U}}_2]$}

\FOR {$l=1,\ldots,K$}
\STATE{Generate a random sample of size $N$ from $\mathcal{N}(\pmb 0,\pmb{I_{d_1}}): \pmb w_1^{(1)},\ldots,\pmb w_1^{(N)}$}
\STATE{Compute $\hat{G}_{mc}(\pmb w_1^{(i)}), i=1,\ldots,N$, by Eqn. (\ref{gGhat})} using a sample size of $N_1$
\STATE{Construct a $p$ degree PCE with $(\pmb w_1^{(i)},\hat{G}_{mc}(\pmb w_1^{(i)})), i=1,\ldots,N$, by Eqn. (\ref{LS})}
\RETURN{$\hat{k}_{0,l}$}
\ENDFOR
\RETURN{$\frac1{K}\sum_{l=1}^{K}{\hat{k}_{0,l}}$}
\end{algorithmic}
\end{algorithm}

Let $\Gamma_i = \E[\E[(D_{\pmb w_i}f)^T(D_{\pmb w_i}f)|\pmb w_i]], i= 1,\ldots,d$, where $\pmb w_i=\pmb u_i^T \pmb z$, and $\pmb u_i$ is the $i$th column of the eigenvector matrix $\pmb U$. The value of $\Gamma_i$ reflects the change in function values as measured by the average finite differences along the direction of $\pmb w_i$. In Algorithm \ref{alg:gas}, the choice of $d_1$ will be made as follows: we will estimate $\Gamma_i$'s using Monte Carlo, and choose $d_1$ such that the gap between $\hat\Gamma_{d_1}$ and $\hat\Gamma_{d_1+1}$ is the largest. Algorithm \ref{alg:Gamma} describes how to estimate $\Gamma_i$ using Monte Carlo.


\begin{remark}
\label{rem_shiftedSobol}
In Algorithm \ref{alg:gas} a sample $\pmb v^{(l,1)},\ldots,\pmb v^{(l,M_2)}$ is generated for a given $\pmb z^{(l)}$ using shifted Sobol' sequence. Here we describe the details. Let $\Phi=(\Phi_1,\ldots,\Phi_d)$ denote the CDF of the $d$-dimensional standard multivariate normal distribution, where each $\Phi_i$ is the standard normal CDF, and $\Phi(\pmb z)=(\Phi_1(z_1),\ldots,\Phi_d(z_d))$, for $\pmb z=(z_1,\ldots,z_d)$. Let $\pmb x^{(1)},\ldots,\pmb x^{(M_2)}$ be the first $M_2$ vectors of the $d$-dimensional Sobol' sequence \cite{sobol1967distribution}. We shift the Sobol' sequence by the vector $\Phi(\pmb z^{(l)})$ to obtain the shifted Sobol' sequence $\pmb y^{(1)},\ldots,\pmb y^{(M_2)}$, where
\begin{equation}
\pmb y^{(i)} = (\pmb x^{(i)}+\Phi(\pmb z^{(l)}))\mod 1, i =1,\ldots,M_2.
\end{equation}
Here addition between two vectors is componentwise, and mod 1 means the decimal part of the sum. We then set $\pmb v^{(l,i)}=\Phi^{-1}(\pmb y^{(i)}),i=1,\ldots,M_2$ in Algorithm \ref{alg:gas}. This approach has the following advantages. First, it estimates the inner expectations of the matrix $\pmb C$ (see Eqns. (\ref{def_C}) and (\ref{gMC_C})) using the quasi-Monte Carlo method (shifting by a fixed vector preserves the property of uniform distribution mod 1), which has a better rate of convergence than the Monte Carlo method. Secondly, by using a Sobol' sequence shifted by $\pmb z^{(l)}$, we avoid obtaining denominators $\pmb v^{(l,i)}- \pmb z^{(l)}$ that are too small (by controlling the sample size $M_2$) in the finite differences of Eqn. (\ref{def_D}) while estimating their expectations. 
\end{remark}

Algorithm \ref{alg:Gamma} estimates $\Gamma_i$, $i=1,\ldots,d$ using quasi-Monte Carlo simulation. In the algorithm $\Phi$ denotes the CDF of the one-dimensional standard normal distribution. Since the conditional distribution of $v^{(k)}$ given $\hat{\pmb u}_i$ and $\pmb z^{(j)}$ is $N(-\hat{\pmb u}_i^T\pmb z^{(j)}, 1)$, we are able to use similar Shifted Sobol' sequence to calculate $v^{(k)}$'s.  

In our numerical results, we used the Sobol' sequence, and excluded the first point $v'^{(1)}=0.5$ of the sequence to avoid having $\Phi^{-1}( v'^{(1)})=0$ which implies an increment of zero.


\begin{algorithm}[h]
\caption{Estimating $\Gamma_i$'s}
\label{alg:Gamma}
\begin{algorithmic}
\STATE{\hspace*{\algorithmicindent} \textbf{Input:} $f(\pmb z)$ with $\pmb z\sim \mathcal{N}(\pmb 0,\pmb{I_d})$, $M_1, M_2$, estimated eigenvector matrix $\hat{\pmb{U}}$, whose $i$th column is $\hat{\pmb u}_i$}
\STATE{\hspace*{\algorithmicindent} \textbf{Output:} estimates for $\Gamma_i$'s}

\STATE{Let $v'^{(1)},\ldots, v'^{(M_2+1)}$ be the first $M_2+1$ numbers of the one-dimensional Sobol' sequence.}
\STATE{Generate a random sample of size $M_1$ from $\mathcal{N}(\pmb 0,\pmb{I_d}): \pmb z^{(1)},\ldots,\pmb z^{(M_1)}$.}
\STATE{Set $\hat\Gamma_i=0,i=1,\ldots,d$.}
\FOR{$i=1,\ldots,d$} 

\FOR{$j=1,\ldots,M_1$} 
\FOR{$k=1,\ldots,M_2$} 
\STATE{Set $v^{(k)}=\Phi^{-1}((v'^{(k+1)}+\Phi(\hat{\pmb u}_i^T\pmb z^{(j)}))\mod 1)-\hat{\pmb u}_i^T\pmb z^{(j)}$.}
\STATE{Accumulate the sum $\hat\Gamma_i = \hat\Gamma_i + (f(\pmb z^{(j)} +  v^{(k)}\hat{\pmb u}_i)-f(\pmb z^{(j)}))/ v^{(k)}$}
\ENDFOR
\ENDFOR
\STATE{Set $\hat\Gamma_i=\frac{1}{M_1M_2}\hat\Gamma_i$}
\ENDFOR
\RETURN{$\hat\Gamma_1,\ldots,\hat\Gamma_d$}
\end{algorithmic}
\end{algorithm}

\section{Error Analysis}
\label{sec:error}
In this section we develop the error analysis for the GAS method. We use the same framework as Constantine et al. \cite{constantine2014active}, and start with describing the various error sources.
\begin{itemize}
\item $F_{gas}-f$ (Eqn. (\ref{equ:condi})). This is the inherent approximation error for both methods, AS and GAS, where the function $f$ is replaced by another function $F_{gas}$ whose domain is a subspace. 
\item $\hat F_{gas}-f$ (Eqn. (\ref{equ:estcondi})). This error results from the approximation of matrix $\pmb C$ and $\pmb U$.
\item $\hat F_{mc}-\hat F_{gas}$ (Eqn. (\ref{gGhat})). Monte Carlo error due to approximating an integral.
\item $\hat F_{sgt}-\hat F_{mc}$ (Eqn. (\ref{equ:sgt})). This error results when the surrogate model is used to approximate $\hat F_{mc}$.
\end{itemize}

In the results that follow, the norm of a matrix, $\|\pmb C\|$, is its spectral norm, and the norm of a vector, $\|\pmb v\|$, is its 2-norm. Define the radius $R$ of a bounded set $\Omega'$ as
\begin{equation}
R=\frac{1}{2} \sup_{\pmb z,\pmb v\in\Omega'}{||\pmb z-\pmb v||}.  
\end{equation}

\begin{theorem}\label{leqR}
Assume that $\E[f(\pmb z)]=0$, then 
$$\E[f(\pmb z)^2]\leq 2R^2\E[\E[(D_{\pmb z}f)^T(D_{\pmb z}f)|\pmb z]]+\kappa_2,$$
where $\Omega'$ is a bounded subset of $\Omega$ with $\int_{\Omega'}d\pmb F(\pmb z)=1-\epsilon$, and 
\begin{equation}
\kappa_2=\frac{d(2\epsilon-\epsilon^2)}{2}\sup_{\pmb z,\pmb v\in\Omega}(f(\pmb z)-f(\pmb v))^2.    
\end{equation}
\end{theorem}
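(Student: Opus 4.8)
The plan is to reduce the statement to a bound on the mean-square increment of $f$ and then expand that increment coordinate-by-coordinate so that the finite differences defining $\pmb C$ appear. First, let $\pmb v$ be an independent copy of $\pmb z$, so both are distributed according to $\pmb F$. Since $\E[f(\pmb z)]=0$ and $\pmb z,\pmb v$ are independent, $\E[(f(\pmb z)-f(\pmb v))^2]=2\E[f(\pmb z)^2]$, which gives the identity $\E[f(\pmb z)^2]=\tfrac12\,\E[(f(\pmb z)-f(\pmb v))^2]$, the outer expectation being over $\pmb F\times\pmb F$. It therefore suffices to control the average squared increment of $f$ over pairs $(\pmb z,\pmb v)$.

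Next I would telescope the increment through hybrid points. Let $\pmb h^{(0)}=\pmb z$ and let $\pmb h^{(i)}$ agree with $\pmb v$ in its first $i$ coordinates and with $\pmb z$ in the rest, so $\pmb h^{(d)}=\pmb v$. Then $f(\pmb v)-f(\pmb z)=\sum_{i=1}^d\big(f(\pmb h^{(i)})-f(\pmb h^{(i-1)})\big)=\sum_{i=1}^d (v_i-z_i)\,D_i$, where $D_i=\big(f(\pmb h^{(i)})-f(\pmb h^{(i-1)})\big)/(v_i-z_i)$ is a one-coordinate finite difference based at $\pmb h^{(i-1)}$, and Cauchy--Schwarz in $\pmb R^d$ gives $(f(\pmb v)-f(\pmb z))^2\le \|\pmb v-\pmb z\|^2\sum_{i=1}^d D_i^2$. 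The step I expect to be the main obstacle is that the base point of $D_i$ is $\pmb h^{(i-1)}$, not $\pmb z$, so $D_i$ is not literally the $i$th entry of $D_{\pmb z}f$. This is resolved distributionally: because all $2d$ coordinates $v_1,\dots,v_d,z_1,\dots,z_d$ are i.i.d. under $\pmb F\times\pmb F$, the pair $(\pmb h^{(i-1)},v_i)$ has the same law as $(\pmb z,v_i)$ with $v_i\sim F_i$ independent of $\pmb z$; hence the $\pmb F\times\pmb F$-expectation of $D_i^2$ equals $\E_{\pmb z}\E_{v_i}\big[\big((f(\pmb v_{\{i\}}{:}\pmb z_{-\{i\}})-f(\pmb z))/(v_i-z_i)\big)^2\big]$, and summing over $i$ recovers exactly $\E[\E[(D_{\pmb z}f)^T(D_{\pmb z}f)|\pmb z]]$.

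Finally I would split the integration over $\pmb F\times\pmb F$ into the ``good'' set $\Omega'\times\Omega'$ and its complement. On $\Omega'\times\Omega'$ the definition of $R$ gives $\|\pmb v-\pmb z\|^2\le 4R^2$, so the increment is at most $4R^2\sum_{i}D_i^2$; since $\sum_i D_i^2\ge 0$, enlarging the region back to all of $\pmb R^d\times\pmb R^d$ only increases this term, which by the previous paragraph equals $4R^2\,\E[\E[(D_{\pmb z}f)^T(D_{\pmb z}f)|\pmb z]]$. On the complement, whose measure is $1-(1-\epsilon)^2=2\epsilon-\epsilon^2$, I bound the increment crudely by $\sup_{\pmb z,\pmb v\in\Omega}(f(\pmb z)-f(\pmb v))^2$. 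Adding the two pieces and multiplying by $\tfrac12$ yields $\E[f(\pmb z)^2]\le 2R^2\,\E[\E[(D_{\pmb z}f)^T(D_{\pmb z}f)|\pmb z]]+\tfrac12(2\epsilon-\epsilon^2)\sup_{\pmb z,\pmb v\in\Omega}(f(\pmb z)-f(\pmb v))^2$; this additive constant is at most $\kappa_2$ since $d\ge1$, so the stated inequality follows. Throughout I use that $f$ is square-integrable, that the finite-difference energies making up $\pmb C$ are finite as established in Section~\ref{sec:deriv}, and that $\sup_{\pmb z,\pmb v\in\Omega}(f(\pmb z)-f(\pmb v))^2$ is finite so that $\kappa_2$ is meaningful; the hybrid values $f(\pmb h^{(i)})$ are well defined because the domain is $\pmb R^d$.
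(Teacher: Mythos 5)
Your proof is correct, but it reaches the conclusion by a genuinely different route than the paper. The paper's proof starts from the Sobol' total-index inequality $\sum_{i=1}^d \bar S_i \geq 1$ (a consequence of the ANOVA decomposition for the product measure $\pmb F$), which bounds $\sigma^2$ by the sum of $d$ one-coordinate mean-square increments $\tfrac12\E[(f(\pmb z)-f(\pmb v_{\{i\}}{:}\pmb z_{-\{i\}}))^2]$; it then writes each increment as $(v_i-z_i)^2$ times a squared entry of $D_{\pmb z}f$, bounds $(v_i-z_i)^2\leq 4R^2$ on $A=\Omega'\times\Omega'$, and absorbs the $d$ terms on $A^c$ into $\kappa_2$. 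You instead use the elementary identity $2\sigma^2=\E[(f(\pmb z)-f(\pmb v))^2]$ for an independent copy, telescope the increment through hybrid points, apply Cauchy--Schwarz to get $\|\pmb v-\pmb z\|^2\sum_i D_i^2$, and then invoke the i.i.d.\ coordinate structure to identify $\E[D_i^2]$ with the diagonal entries $C_{ii}$ --- you correctly isolate the hybrid-base-point issue as the crux and resolve it distributionally. Both arguments ultimately rest on the product form of $\pmb F$, but yours avoids citing the Sobol'/ANOVA machinery, and it actually yields the slightly sharper remainder $\tfrac12(2\epsilon-\epsilon^2)\sup_{\pmb z,\pmb v\in\Omega}(f(\pmb z)-f(\pmb v))^2=\kappa_2/d$, since the bad set is charged once rather than once per coordinate; the price is the extra bookkeeping for the hybrid points (including the implicit requirement, shared with the paper, that $f$ be defined at them, and the measure-zero convention when $v_i=z_i$). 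Your order of operations in the splitting step --- bound $\|\pmb v-\pmb z\|^2$ by $4R^2$ pointwise on $\Omega'\times\Omega'$ first, then enlarge the region for the nonnegative integrand, then apply the distributional identity on the full space --- is the right one and keeps the argument airtight.
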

\begin{proof}
We partition $\Omega\times\Omega$ into two subsets: $A=\{(\pmb z,\pmb v)|\pmb z,\pmb v\in\Omega'\}$ and $A^c$ (complement of $A$). Note that $\int_{A^c}d\pmb F(\pmb z)d\pmb F(\pmb v)=1-(1-\epsilon)^2$.

By properties of upper Sobol' indices and definition of $D_{\pmb z}f$,
\begin{align}
\E[f(\pmb z)^2]=\sigma^2&\leq\sigma^2\sum_{i=1}^d{\bar S_i}
=\frac1 2\sum_{i=1}^d\int{(f(\pmb z)-f(\pmb v_{\{i\}}{:}\pmb z_{-\{i\}}))^2d\pmb F(\pmb z)\pmb F(\pmb v)}\\ \nonumber
&\leq 2R^2\sum_{i=1}^d\int_A{((f(\pmb z)-f(\pmb v_{\{i\}}{:}\pmb z_{-\{i\}}))/(z_i-v_i))^2d\pmb F(\pmb v)d\pmb F(\pmb z)}+\kappa_2\\ \nonumber
&\leq 2R^2\E[\E[(D_{\pmb z}f)^T(D_{\pmb z}f)|\pmb z]]+\kappa_2,
\end{align}
which proves the theorem.
\end{proof}

The following four theorems establish error bounds for the sources of error discussed above. 
\begin{theorem}\label{error1}
The mean squared error of $F_{gas}$ satisfies
\[
\E[(F_{gas}-f)^2]\leq 2R^2(\lambda_{d_1+1}+\ldots+\lambda_{d}+C_{U_2})+\kappa_2.
\]
\end{theorem}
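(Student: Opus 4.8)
The plan is to recognize $F_{gas}$ as a conditional expectation and then invoke Theorem \ref{leqR} in the rotated coordinates, holding $\pmb w_1$ fixed. By Eqn. (\ref{equ:condi}), $F_{gas}(\pmb z)=G(\pmb w_1)=\E[f\mid\pmb w_1]$, so the residual $g:=f-F_{gas}$ satisfies $\E[g\mid\pmb w_1]=0$; that is, for each fixed value of $\pmb w_1$ the map $\pmb w_2\mapsto f(\pmb U_1\pmb w_1+\pmb U_2\pmb w_2)-G(\pmb w_1)$ has mean zero with respect to the conditional law of $\pmb w_2$ given $\pmb w_1$. This is exactly the zero-mean hypothesis required by Theorem \ref{leqR}, now applied in the $\pmb w_2$ variables rather than in $\pmb z$.

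Next I would apply the conditional analog of Theorem \ref{leqR}: treating $\pmb w_1$ as a parameter and $\pmb w_2$ as the integration variables, the same upper-Sobol'/Poincar\'e argument yields
\[
\E[g^2\mid\pmb w_1]\leq 2R^2\,\E[\E[(D_{\pmb w_2}g)^T(D_{\pmb w_2}g)\mid\pmb w_1,\pmb w_2]\mid\pmb w_1]+\kappa_2.
\]
The key simplification is that $D_{\pmb w_2}$ is a finite-difference operator, so it annihilates the $\pmb w_2$-constant term $G(\pmb w_1)$; hence $D_{\pmb w_2}g=D_{\pmb w_2}f$ and the right-hand side involves only $f$. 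Taking the outer expectation over $\pmb w_1$ and using the tower property gives
\[
\E[(F_{gas}-f)^2]\leq 2R^2\,\E[\E[(D_{\pmb w_2}f)^T(D_{\pmb w_2}f)\mid\pmb w_2]]+\kappa_2,
\]
at which point Lemma \ref{EDD} identifies the inner expectation as $\lambda_{d_1+1}+\ldots+\lambda_{d}+C_{U_2}$, completing the bound.

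Two bookkeeping points need care. First, the radius $R$ and the supremum defining $\kappa_2$ must be controlled on the $\pmb w_2$-slice: since $\pmb U$ is orthogonal, the change of variables $\pmb w=\pmb U^T\pmb z$ preserves Euclidean distances, and restricting to a fixed $\pmb w_1$ only shrinks the set of attainable separations, so the slice radius is at most $R$ and the conditional oscillation of $f$ is at most $\sup_{\pmb z,\pmb v\in\Omega}(f(\pmb z)-f(\pmb v))^2$; thus the same $R$ and $\kappa_2$ remain valid upper bounds. Second, applying Theorem \ref{leqR} coordinatewise in $\pmb w_2$ implicitly relies on the conditional law of $\pmb w_2$ given $\pmb w_1$ having product structure, so that the upper Sobol' decomposition underlying the proof of Theorem \ref{leqR} goes through; this holds in the Gaussian setting emphasized after Eqn. (\ref{equ:condi}), where $\pi_{\pmb w_2\mid\pmb w_1}=\pi_{\pmb w_2}$ factorizes.

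I expect the main obstacle to be precisely this last point: rigorously transferring Theorem \ref{leqR}, whose proof is built on the coordinatewise upper Sobol' indices $\bar S_i$, to the rotated and conditioned variables $\pmb w_2$. In the non-Gaussian case the conditional measure need not be a product, and one would either restrict to the Gaussian inputs assumed in the algorithm or establish a rotation-invariant form of the Poincar\'e-type inequality behind Theorem \ref{leqR}. Everything else---the zero-conditional-mean identity, the cancellation $D_{\pmb w_2}g=D_{\pmb w_2}f$, and the final appeal to Lemma \ref{EDD}---is routine once the conditional inequality is established.
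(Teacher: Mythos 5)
Your proposal follows essentially the same route as the paper's proof: observe that $\E[(F_{gas}-f)\mid\pmb w_1]=0$, apply Theorem \ref{leqR} conditionally in the $\pmb w_2$ variables, take the outer expectation, and invoke Lemma \ref{EDD} to identify the result as $\lambda_{d_1+1}+\ldots+\lambda_d+C_{U_2}$. In fact you are more careful than the paper, which silently applies Theorem \ref{leqR} to the rotated, conditioned variables without addressing the two points you flag (the slice radius under the orthogonal change of variables, and the product structure of $\pi_{\pmb w_2\mid\pmb w_1}$ needed for the upper-Sobol' argument, which holds in the Gaussian setting the authors adopt).
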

\begin{proof}
By definition of $F_{gas}$, we have $E[(F_{gas}-f)|\pmb{w}_1]=0$.
\begin{equation}
    \begin{split}
    \E[(F_{gas}-f)^2] &= \E[\E[(F_{gas}-f)^2|\pmb{w}_1]]\\
    &\leq \E[2R^2\E[(D_{\pmb{w}_2}f)^T(D_{\pmb{w}_2}f)|\pmb{w}_1]+\kappa_2]\hspace{30pt}\text{(Theorem \ref{leqR})}\\
    &= 2R^2\E[(D_{\pmb{w}_2}f)^T(D_{\pmb{w}_2}f)]+\kappa_2\\
    &= 2R^2(\lambda_{d_1+1}+\ldots+\lambda_{d}+C_{U_2})+\kappa_2.\hspace{30pt}\text{(Lemma \ref{EDD})}
    \end{split}
\end{equation}
\end{proof}

\begin{remark}
\label{eps_assump}
Note that $||\pmb{U}_2^T\hat{\pmb{U}}_2||\leq||\pmb{U}_2^T||||\hat{\pmb{U}}_2||=1$, a fact that will be used in the proof of the next theorem. We will assume, in the same theorem, 
\begin{equation}
||\pmb{U}_1\pmb{U}_1^T-\hat{\pmb{U}}_1\hat{\pmb{U}}_1^T||=||\pmb{U}_1^T\hat{\pmb{U}}_2||\leq\epsilon,
\end{equation}
for some small positive $\epsilon$. We will justify this assumption in Section \ref{sec:assump}. 
\end{remark}

\begin{theorem}\label{error2}
Let $\hat{\pmb U}=[\hat{\pmb{U}}_1\quad\hat{\pmb{U}}_2]$ and suppose $\hat{\pmb{U}}_1$ and $\hat{\pmb{U}}_2$ satisfy $||\pmb{U}_1^T\hat{\pmb{U}}_2||\leq\epsilon$. Then the mean squared error of $\hat F_{gas}$ satisfies
\[
\E[(\hat F_{gas}-f)^2]\leq 2R^2(\epsilon(\lambda_{1}+\ldots+\lambda_{d_1})^{\frac1 2}+(\lambda_{d_1+1}+\ldots+\lambda_{d})^{\frac1 2})^2+2R^2C_{U,D}+\kappa_2.
\]
\end{theorem}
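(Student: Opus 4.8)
The plan is to follow the template of the proof of Theorem \ref{error1}, but working in the \emph{estimated} rotated coordinates $\hat{\pmb w}_1 = \hat{\pmb U}_1^T \pmb z$, $\hat{\pmb w}_2 = \hat{\pmb U}_2^T \pmb z$ in place of the exact ones. Since $\hat F_{gas}(\pmb z) = \hat G(\hat{\pmb w}_1)$ is by construction (Eqn.~(\ref{equ:estcondi})) the conditional expectation of $f$ given $\hat{\pmb w}_1$, we have $\E[(\hat F_{gas} - f)\mid\hat{\pmb w}_1] = 0$, and the finite difference of $\hat F_{gas}-f$ along $\hat{\pmb w}_2$ reduces to that of $-f$. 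First I would apply Theorem \ref{leqR} conditionally on $\hat{\pmb w}_1$ and then average over $\hat{\pmb w}_1$, exactly as in Theorem \ref{error1}, to get
\[
\E[(\hat F_{gas} - f)^2] \leq 2R^2\, \E[(D_{\hat{\pmb w}_2} f)^T (D_{\hat{\pmb w}_2} f)] + \kappa_2,
\]
so that all remaining effort concerns the middle expectation.

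Next I would reduce that finite-difference quadratic form to a quadratic form in the exact matrix $\pmb C$. Repeating the Taylor-expansion argument behind Eqn.~(\ref{equ:210}) with $\hat{\pmb U}_2$ in place of $\pmb U_2$ gives $D_{\hat{\pmb w}_2} f = \hat{\pmb U}_2^T D_{\pmb z} f + \hat{\pmb a}'$, where $\hat{\pmb a}' = R_1(\hat{\pmb U}_2^T(\pmb z - \pmb v)) - \hat{\pmb U}_2^T R_1(\pmb z - \pmb v)$ collects the remainder terms. Expanding the quadratic, taking expectations, and using $\E[(D_{\pmb z} f)(D_{\pmb z} f)^T] = \pmb C$ from Eqn.~(\ref{def_C}) yields
\[
\E[(D_{\hat{\pmb w}_2} f)^T (D_{\hat{\pmb w}_2} f)] = trace(\hat{\pmb U}_2^T \pmb C \hat{\pmb U}_2) + C_{U,D},
\]
where $C_{U,D} = trace(\E[2\hat{\pmb U}_2^T (D_{\pmb z} f)(\hat{\pmb a}')^T + \hat{\pmb a}'(\hat{\pmb a}')^T])$ is the exact analogue of the constant $C_{U_2}$ of Lemma \ref{EDD}. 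This isolates the clean spectral term $trace(\hat{\pmb U}_2^T \pmb C \hat{\pmb U}_2)$, which must produce the squared bracket appearing in the statement.

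The heart of the argument, and the step I expect to be the main obstacle, is the perturbation bound on $trace(\hat{\pmb U}_2^T \pmb C \hat{\pmb U}_2)$. The key trick is to rewrite this trace as a Frobenius norm: since $\pmb C$ is positive semi-definite, $\pmb C^{1/2} = \pmb U \Lambda^{1/2}\pmb U^T$ exists, and by orthogonality of $\pmb U$ we have $trace(\hat{\pmb U}_2^T \pmb C \hat{\pmb U}_2) = \|\pmb C^{1/2}\hat{\pmb U}_2\|_F^2 = \|\Lambda^{1/2}\pmb U^T \hat{\pmb U}_2\|_F^2$, where $\|\cdot\|_F$ denotes the Frobenius norm. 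Splitting $\pmb U^T \hat{\pmb U}_2$ into its two row blocks $\pmb U_1^T\hat{\pmb U}_2$ and $\pmb U_2^T \hat{\pmb U}_2$ and using the triangle inequality for $\|\cdot\|_F$ gives
\[
\|\Lambda^{1/2}\pmb U^T\hat{\pmb U}_2\|_F \leq \|\Lambda_1^{1/2}\pmb U_1^T\hat{\pmb U}_2\|_F + \|\Lambda_2^{1/2}\pmb U_2^T\hat{\pmb U}_2\|_F.
\]
Each piece is then bounded with the submultiplicative inequality $\|AB\|_F \leq \|A\|_F\|B\|$: the first term is at most $\|\Lambda_1^{1/2}\|_F\,\|\pmb U_1^T\hat{\pmb U}_2\| \leq (\lambda_1 + \ldots + \lambda_{d_1})^{1/2}\epsilon$ by the assumption $\|\pmb U_1^T\hat{\pmb U}_2\| \leq \epsilon$, while the second is at most $\|\Lambda_2^{1/2}\|_F\,\|\pmb U_2^T\hat{\pmb U}_2\| \leq (\lambda_{d_1+1}+\ldots+\lambda_d)^{1/2}$ by the bound $\|\pmb U_2^T\hat{\pmb U}_2\| \leq 1$ from Remark \ref{eps_assump}. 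Squaring and substituting back into the two preceding displays produces exactly the claimed estimate.

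The delicate points are the correct placement of Frobenius versus spectral norms in the submultiplicative step, so that the factors $\|\Lambda_i^{1/2}\|_F$ yield precisely the square-root-of-sum-of-eigenvalue terms $(\lambda_1+\ldots+\lambda_{d_1})^{1/2}$ and $(\lambda_{d_1+1}+\ldots+\lambda_d)^{1/2}$, and confirming that the remainder bookkeeping from the Taylor expansion collapses cleanly into the single constant $C_{U,D}$ just as $C_{U_2}$ does in Lemma \ref{EDD}. Everything outside these two checks is routine expansion and application of results already established.
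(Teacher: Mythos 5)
Your proof is correct, and while its outer skeleton matches the paper's (apply Theorem \ref{leqR} conditionally on $\hat{\pmb{w}}_1$, then bound $\E[(D_{\hat{\pmb{w}}_2}f)^T(D_{\hat{\pmb{w}}_2}f)]$ after splitting off the Taylor remainders), the central inequality is handled by a genuinely different argument. The paper inserts the resolution of the identity $\pmb{U}_1\pmb{U}_1^T+\pmb{U}_2\pmb{U}_2^T=\pmb{I_d}$ to rewrite $D_{\hat{\pmb{w}}_2}f$ in terms of $D_{\pmb{w}_1}f$ and $D_{\pmb{w}_2}f$, expands the resulting quadratic form, controls the cross term by the Cauchy--Schwarz inequality, and then invokes Lemma \ref{EDD} to convert $\E[(D_{\pmb{w}_i}f)^T(D_{\pmb{w}_i}f)]$ into eigenvalue sums plus the constants $C_{U_1},C_{U_2}$, which must then be folded (together with the cross terms they generate) into a rather elaborate $C_{U,D}$. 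You instead reduce directly to the exact matrix, writing $\E[(D_{\hat{\pmb{w}}_2}f)^T(D_{\hat{\pmb{w}}_2}f)]=\mathrm{trace}(\hat{\pmb{U}}_2^T\pmb C\hat{\pmb{U}}_2)+C_{U,D}$, and bound the trace by the deterministic perturbation estimate $\mathrm{trace}(\hat{\pmb{U}}_2^T\pmb C\hat{\pmb{U}}_2)=\|\Lambda^{1/2}\pmb U^T\hat{\pmb{U}}_2\|_F^2\leq(\|\Lambda_1^{1/2}\|_F\|\pmb{U}_1^T\hat{\pmb{U}}_2\|+\|\Lambda_2^{1/2}\|_F\|\pmb{U}_2^T\hat{\pmb{U}}_2\|)^2$, using $\|AB\|_F\leq\|A\|_F\|B\|$ and the bounds from Remark \ref{eps_assump}; every step of this checks out. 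What your route buys is that it bypasses Lemma \ref{EDD} entirely for this theorem and yields a single, cleaner remainder constant $C_{U,D}=\mathrm{trace}(\E[2\hat{\pmb{U}}_2^T(D_{\pmb z}f)(\hat{\pmb a}')^T+\hat{\pmb a}'(\hat{\pmb a}')^T])$, free of the $\sqrt{|C_{U_1}|},\sqrt{|C_{U_2}|}$ terms appearing in the paper's constant; the only caveat is that your $C_{U,D}$ is therefore not literally the same quantity as the paper's, though both play the same role of a Taylor-remainder correction and the theorem statement leaves the constant unspecified. The paper's route, by contrast, stays entirely within the finite-difference quantities $D_{\pmb{w}_1}f,D_{\pmb{w}_2}f$ that Lemma \ref{EDD} already analyzed, at the cost of heavier bookkeeping.
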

\begin{proof}
From the Taylor expansions we obtain  
$$D_{\pmb{w}_1}f=\pmb{U}_1^T\nabla_{\pmb{z}}f(\pmb{z})+\pmb{a}_1,$$
$$D_{\pmb{w}_2}f=\pmb{U}_2^T\nabla_{\pmb{z}}f(\pmb{z})+\pmb{a}_2,$$
$$D_{\hat{\pmb{w}}_2}f=\hat{\pmb{U}}_2^T\nabla_{\pmb{z}}f(\pmb{z})+\hat{\pmb{a}_2},$$
where
$$\pmb{a}_1=R_1(\pmb{U}_1^T(\pmb{z}-\pmb{v}))-\pmb{U}_1^TR_1(\pmb{z}-\pmb{v}),$$
$$\pmb{a}_2=R_1(\pmb{U}_2^T(\pmb{z}-\pmb{v}))-\pmb{U}_2^TR_1(\pmb{z}-\pmb{v}),$$
$$\hat{\pmb{a}_2}=R_1(\hat{\pmb{U}}_2^T(\pmb{z}-\pmb{v}))-\hat{\pmb{U}}_2^TR_1(\pmb{z}-\pmb{v}).$$

With $\pmb{U}_1\pmb{U}_1^T+\pmb{U}_2\pmb{U}_2^T=\pmb{I_d}$, we have the following equation:
\begin{equation}
\begin{split}
D_{\hat{\pmb{w}}_2}f&=\hat{\pmb{U}}_2^T\nabla_{\pmb{z}}f(\pmb{z})+\hat{\pmb{a}}_2\\
&=\hat{\pmb{U}}_2^T(\pmb{U}_1\pmb{U}_1^T+\pmb{U}_2\pmb{U}_2^T)\nabla_{\pmb{z}}f(\pmb{z})+\hat{\pmb{a}}_2\\
&=\hat{\pmb{U}}_2^T(\pmb{U}_1D_{\pmb{w}_1}f-\pmb{U_1a_1}+\pmb{U}_2D_{\pmb{w}_2}f-\pmb{U_2a_2})+\hat{\pmb{a}}_2\\
&=\hat{\pmb{U}}_2^T\pmb{U}_1D_{\pmb{w}_1}f+\hat{\pmb{U}}_2^T\pmb{U}_2D_{\pmb{w}_2}f+\hat{\pmb{a}}_2-\hat{\pmb{U}}_2^T(\pmb U_1\pmb a_1+\pmb U_2\pmb a_2).  
\end{split}
\end{equation}
Note that when $\hat{\pmb{U}}_2={\pmb{U}}_2$, we have $\hat{\pmb{a}}_2={\pmb{a}_2}$ and $\hat{\pmb{a}}_2-\hat{\pmb{U}}_2^T(\pmb U_1\pmb a_1 +\pmb U_2\pmb a_2)=0$. 

Following similar steps as in Theorem \ref{error1}, we obtain
\begin{align*}
\E[(\hat F_{gas}-f)^2]&\leq 2R^2\E[\E[(D_{\hat{\pmb{w}}_2}f)^T(D_{\hat{\pmb{w}}_2}f)|\hat{\pmb{w}}_2]]+\kappa_2\\
&=2R^2\E[(D_{\hat{\pmb{w}}_2}f)^T(D_{\hat{\pmb{w}}_2}f)]+\kappa_2.
\end{align*}
Using the fact $||\pmb{U}_2^T\hat{\pmb{U}}_2||\leq1$, and the assumption $||\pmb{U}_1^T\hat{\pmb{U}}_2||\leq\epsilon$ (see Remark \ref{eps_assump}), we obtain
\begin{equation}\nonumber
\begin{split}
&\hspace{15pt}\E[(D_{\hat{\pmb{w}}_2}f)^T(D_{\hat{\pmb{w}}_2}f)]\\ &\leq\E[(D_{{\pmb{w}_2}}f)^T(D_{{\pmb{w}_2}}f)]+2\epsilon\E[(D_{{\pmb{w}_2}}f)^T(D_{{\pmb{w}_1}}f)]+\epsilon^2\E[(D_{{\pmb{w}_1}}f)^T(D_{{\pmb{w}_1}}f)]+C'_{U,D} \\
&\leq(\epsilon\E[(D_{{\pmb{w}_1}}f)^T(D_{{\pmb{w}_1}}f)]^{\frac12}+\E[(D_{{\pmb{w}_2}}f)^T(D_{{\pmb{w}_2}}f)]^{\frac12})^2+C'_{U,D},\text{(Cauchy–Schwarz ineq.)}\\
&\leq (\epsilon(\lambda_{1}+\ldots+\lambda_{d_1})^{\frac1 2}+(\lambda_{d_1+1}+\ldots+\lambda_{d})^{\frac1 2})^2+C_{U,D},\hspace{30pt}\text{(Lemma \ref{EDD})}
\end{split}
\end{equation}
where 
\begin{equation}\nonumber
\begin{split}
&C'_{U,D}=2\E[(\hat{\pmb{U}}_2^T\pmb{U}_1D_{\pmb{w}_1}f+\hat{\pmb{U}}_2^T\pmb{U}_2D_{\pmb{w}_2}f)^T(\hat{\pmb{a}_2}-\hat{\pmb{U}}_2^T(\pmb U_1\pmb a_1+\pmb U_2\pmb a_2))]\\
&\hspace{20pt}+\E[(\hat{\pmb{a}}_2-\hat{\pmb{U}}_2^T(\pmb U_1\pmb a_1+\pmb U_2\pmb a_2))^T(\hat{\pmb{a}_2}-\hat{\pmb{U}}_2^T(\pmb U_1\pmb a_1+\pmb U_2\pmb a_2))],\\
&C_{U,D}=C'_{U,D}+2(\epsilon(\lambda_{1}+\ldots+\lambda_{d_1})^{\frac1 2}+(\lambda_{d_1+1}+\ldots+\lambda_{d})^{\frac1 2})(\epsilon \sqrt{|C_{U_1}|}+\sqrt{|C_{U_2}}|)\\
&\hspace{20pt}+(\epsilon \sqrt{|C_{U_1}|}+\sqrt{|C_{U_2}}|)^2.
\end{split}    
\end{equation}
\end{proof}

We continue to state the final two theorems as follows. 
\begin{theorem}\label{error3}
The mean squared error of $\hat F_{mc}$ satisfies
\begin{equation}
\E[(\hat F_{mc}-f)^2]^{\frac12}\leq \left(1+\frac1 {\sqrt{N_1}}\right)\E[(\hat F_{gas}-f)^2]^{\frac12}.
\end{equation}
\end{theorem}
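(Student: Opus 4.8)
The plan is to split the error $\hat F_{mc}-f$ into two pieces via the $L^2$ triangle (Minkowski) inequality,
\[
\E[(\hat F_{mc}-f)^2]^{\frac12}\leq \E[(\hat F_{mc}-\hat F_{gas})^2]^{\frac12}+\E[(\hat F_{gas}-f)^2]^{\frac12},
\]
and then to show that the first piece equals exactly $\frac{1}{\sqrt{N_1}}$ times the second piece. This reduces everything to a single Monte Carlo variance computation, after which the factor $\left(1+\frac{1}{\sqrt{N_1}}\right)$ factors out immediately.

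First I would recall that $\hat F_{gas}(\pmb z)=\hat G(\hat{\pmb w}_1)$ is, by its defining Eqn. (\ref{equ:estcondi}), the conditional expectation of $f$ given $\hat{\pmb w}_1$, namely the integral of $f(\hat{\pmb U}_1\hat{\pmb w}_1+\hat{\pmb U}_2\pmb u)$ against $\hat\pi_{\hat{\pmb W}_2|\hat{\pmb W}_1}$. Meanwhile $\hat F_{mc}$ from Eqn. (\ref{gGhat}) is the sample mean of $N_1$ independent draws $\hat{\pmb u}^{(i)}$ from that same conditional density. Hence, conditionally on $\hat{\pmb w}_1$, each summand $f(\hat{\pmb U}_1\hat{\pmb w}_1+\hat{\pmb U}_2\hat{\pmb u}^{(i)})$ is an i.i.d.\ random variable whose conditional mean is precisely $\hat F_{gas}$.

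The core computation is then the standard variance-of-the-mean identity. Conditioning on $\hat{\pmb w}_1$, since $\hat F_{gas}$ is the conditional mean and the $N_1$ draws are independent, I would write
\[
\E[(\hat F_{mc}-\hat F_{gas})^2 \mid \hat{\pmb w}_1]=\frac{1}{N_1}\E[(f-\hat F_{gas})^2 \mid \hat{\pmb w}_1],
\]
where on the right $f$ is evaluated at the true $\hat{\pmb w}_2\sim\hat\pi_{\hat{\pmb W}_2|\hat{\pmb W}_1}$. Taking the outer expectation over $\hat{\pmb w}_1$ and using the tower property gives $\E[(\hat F_{mc}-\hat F_{gas})^2]=\frac{1}{N_1}\E[(f-\hat F_{gas})^2]$, so that $\E[(\hat F_{mc}-\hat F_{gas})^2]^{\frac12}=\frac{1}{\sqrt{N_1}}\E[(\hat F_{gas}-f)^2]^{\frac12}$. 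Substituting this into the Minkowski bound from the first step yields the claimed inequality.

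The argument is short, and the main obstacle is conceptual rather than computational: one must keep careful track of two distinct sources of randomness, the input $\pmb z$ (equivalently $(\hat{\pmb w}_1,\hat{\pmb w}_2)$) defining $f(\pmb z)$, and the independent Monte Carlo samples $\hat{\pmb u}^{(i)}$ forming $\hat F_{mc}$. The identification of $\hat F_{gas}$ as the \emph{exact} conditional mean is what makes the cross terms vanish and collapses the conditional variance of $\hat F_{mc}$ into $\frac{1}{N_1}\E[(f-\hat F_{gas})^2 \mid \hat{\pmb w}_1]$; verifying that the $\hat{\pmb u}^{(i)}$ are drawn from the correct conditional density, so that each summand is genuinely unbiased for $\hat F_{gas}$, is the only place where the precise construction of the estimator is used.
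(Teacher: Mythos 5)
Your proposal is correct and follows essentially the same route as the paper: identify $\hat F_{gas}$ as the conditional mean of $f$ given $\hat{\pmb{w}}_1$, use the variance-of-the-sample-mean identity with the tower property to get $\E[(\hat F_{mc}-\hat F_{gas})^2]=\frac{1}{N_1}\E[(\hat F_{gas}-f)^2]$, and finish with the triangle inequality. Your write-up is in fact slightly more careful than the paper's in making explicit that the $\hat{\pmb u}^{(i)}$ must be i.i.d.\ draws from the correct conditional density.
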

\begin{proof}
The conditional variance of ${f}(\pmb{z})$ given $\hat{\pmb{w}}_1$ is $\E[(\hat F_{gas}-f)^2|\hat{\pmb{w}}_1]$. Since the variance of $\hat F_{mc}$ given $\hat{\pmb{w}}_1$ is equal to the variance of ${f}(\pmb{z})$ given $\hat{\pmb{w}}_1$ divided by $N_1$, we have:
$$\E[(\hat F_{mc}-\hat F_{gas})^2]=\E[\E[(\hat F_{mc}-\hat F_{gas})^2|\hat{\pmb{w}}_1]]=\E\left[\frac{\E[(\hat F_{gas}-f)^2|\hat{\pmb{w}}_1]}{N_1}\right]=\frac{1}{N_1}\E[(\hat F_{gas}-f)^2].$$ 
Applying the triangle inequality completes the proof.
\end{proof}

\begin{theorem}\label{error4}
Let $\hat{F}_{sgt}$ be a surrogate model obtained from the sample points $(\pmb z^{(i)},\hat{G}(\pmb{z}^{(i)}))$, $i=1,\ldots,N$, where $\pmb z^{(i)}$'s are sampled at random from the distribution of $\hat{\pmb{w}}_1$ and $\hat{G}(\pmb{z}^{(i)})$ is computed by Eqn. (\ref{gGhat}). Assume that the surrogate model satisfies Assumption \ref{sur_assump}, and $||\pmb{U}_1^T\hat{\pmb{U}}_2||\leq\epsilon$ for some $\epsilon >0$. 
Then the mean squared error of $\hat{F}_{sgt}$ satisfies
\begin{align*}
\E[(\hat F_{sgt}-f)^2]^{\frac12}
&\leq 
\left(1+\frac1{\sqrt{N_1}}\right)(2R^2(\epsilon(\lambda_{1}+\ldots+\lambda_{d_1})^{\frac1 2}+(\lambda_{d_1+1}+\ldots+\lambda_{d})^{\frac1 2})^2 \\
&+2R^2C_{U,D}+\kappa_2)^{\frac12}+\kappa_1.
\end{align*}
\end{theorem}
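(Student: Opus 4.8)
The plan is to bound the total error $\E[(\hat F_{sgt}-f)^2]^{1/2}$ by chaining Minkowski's inequality in $L^2$ with the three bounds already established, namely Theorem \ref{error2}, Theorem \ref{error3}, and Assumption \ref{sur_assump}. The natural decomposition inserts the intermediate quantity $\hat F_{mc}$ between the surrogate model $\hat F_{sgt}$ and the target $f$, so that the two resulting pieces are each controlled by a result in hand. First I would write
\[
\E[(\hat F_{sgt}-f)^2]^{1/2} \leq \E[(\hat F_{sgt}-\hat F_{mc})^2]^{1/2} + \E[(\hat F_{mc}-f)^2]^{1/2},
\]
which is just the triangle inequality applied to the random variables $\hat F_{sgt}-\hat F_{mc}$ and $\hat F_{mc}-f$ in the $L^2$ norm induced by $\E[\,\cdot\,]$.

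Next the two summands are dispatched separately. The surrogate-model term $\E[(\hat F_{sgt}-\hat F_{mc})^2]^{1/2}$ is bounded directly by $\kappa_1$ via Assumption \ref{sur_assump}. The Monte Carlo term is handled by Theorem \ref{error3}, which gives
\[
\E[(\hat F_{mc}-f)^2]^{1/2} \leq \left(1+\frac{1}{\sqrt{N_1}}\right)\E[(\hat F_{gas}-f)^2]^{1/2}.
\]
Since the present statement carries the same hypothesis $\|\pmb{U}_1^T\hat{\pmb{U}}_2\|\leq\epsilon$ used in Theorem \ref{error2}, I would then substitute that theorem in square-root form,
\[
\E[(\hat F_{gas}-f)^2]^{1/2} \leq \left(2R^2(\epsilon(\lambda_1+\ldots+\lambda_{d_1})^{1/2}+(\lambda_{d_1+1}+\ldots+\lambda_d)^{1/2})^2+2R^2C_{U,D}+\kappa_2\right)^{1/2}.
\]
Combining the three estimates, i.e. adding $\kappa_1$ to the product of the factor $(1+1/\sqrt{N_1})$ with the last right-hand side, reproduces the claimed inequality verbatim.

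For this theorem there is no genuinely hard analytic step; the argument is a mechanical concatenation of Minkowski's inequality with the three earlier bounds, and the bulk of the real work was already done in Theorems \ref{error1}--\ref{error3} and Lemma \ref{EDD}. The only point that requires care, rather than an obstacle, is consistency of hypotheses and notation: one must ensure the $\epsilon$-assumption is in force so that Theorem \ref{error2} applies exactly, and that the $\hat F_{sgt}$ described here (built from the sample $(\pmb z^{(i)},\hat G(\pmb z^{(i)}))$) is precisely the surrogate whose $L^2$ distance to $\hat F_{mc}$ is controlled by Assumption \ref{sur_assump}. With those aligned, taking square roots commutes cleanly through the chain and the bound follows immediately.
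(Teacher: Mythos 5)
Your proposal is correct and matches the paper's intent exactly: the paper's own proof is the one-line remark that the result follows from Theorems \ref{error2} and \ref{error3} (together with Assumption \ref{sur_assump}), and your decomposition via the triangle inequality through $\hat F_{mc}$ is precisely the chain being invoked. No gaps.
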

\begin{proof}
The result follows from Theorem \ref{error2} and Theorem \ref{error3}.
\end{proof}

\section{Estimation of the Global Active Subspace}
\label{sec:assump}
In this section, we will discuss the error when $\pmb C$ and $\pmb U$ are approximated as $\hat{\pmb C}$ and $\hat{\pmb U}$. Our results are analogous to the estimates given by \cite{constantine2014computing} for the active subspace method where $\pmb C$ is obtained from the gradient information.

We start with a result from Tropp \cite{tropp2012user}.

\begin{lemma}\label{yiduiXj}
Consider the finite sequence ${\pmb X_j}$ of independent, random, Hermitian matrices with dimension $d$. Assume that $\E[\pmb X_j]=0$, and $\lambda_{\max}(\pmb X_j)\leq R$ almost surely, where $\lambda_{\max}(\pmb X_j)$ represents the largest eigenvalue of matrix $\pmb X_j$. Compute the norm of the total variance:
\begin{equation}
\sigma^2=\left\|\sum_{j}\E[\pmb X_j^2]\right\|,
\end{equation}
where $\pmb X_j^2$ is the square of the matrix $\pmb X_j$. We have for any $\tau>0$,
\begin{equation}
P\left\{\lambda_{\max}(\sum_{j}\pmb X_j)\geq\tau\right\}\leq
\begin{cases}
d\exp{(-3\tau^2/8\sigma^2)},&\tau\leq\sigma^2/R,\\
d\exp{(-3\tau/8R)},&\tau>\sigma^2/R.
\end{cases}
\end{equation}
\end{lemma}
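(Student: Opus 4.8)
The plan is to recognize this statement as the Matrix Bernstein inequality from Tropp \cite{tropp2012user}, and to reproduce its proof by the matrix Laplace transform method. The first step is a matrix analogue of the Chernoff bound: for every $\theta>0$, the spectral mapping theorem together with $\lambda_{\max}(\pmb A)\leq\mathrm{tr}\exp(\pmb A)$ for Hermitian $\pmb A$ gives
\[
P\left\{\lambda_{\max}\left(\sum_j\pmb X_j\right)\geq\tau\right\}\leq e^{-\theta\tau}\,\E\left[\mathrm{tr}\exp\left(\theta\sum_j\pmb X_j\right)\right].
\]
This converts the tail bound into a question about the trace of a matrix exponential, at the cost of a free parameter $\theta$ to be optimized at the end.

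The second step is the deep ingredient and the one I expect to be the main obstacle. I would invoke the subadditivity of matrix cumulant generating functions, which follows from Lieb's concavity theorem (that $\pmb A\mapsto\mathrm{tr}\exp(\pmb H+\log\pmb A)$ is concave on positive-definite $\pmb A$) combined with Jensen's inequality. Using independence of the $\pmb X_j$, this yields
\[
\E\left[\mathrm{tr}\exp\left(\theta\sum_j\pmb X_j\right)\right]\leq\mathrm{tr}\exp\left(\sum_j\log\E\left[e^{\theta\pmb X_j}\right]\right),
\]
decoupling the summands inside the trace exponential. I would quote Lieb's theorem rather than reprove it, since it is not elementary; everything preceding and following it is comparatively routine.

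The third step bounds each matrix cumulant generating function. Starting from the scalar estimate $e^{\theta x}\leq1+\theta x+g(\theta)x^2$, valid for all $x\leq R$ with $g(\theta)=(e^{\theta R}-\theta R-1)/R^2$ (the coefficient being monotone in $x$), I would lift it to matrices using the eigenvalue bound $\lambda_{\max}(\pmb X_j)\leq R$ and the transfer rule, so that $\E[e^{\theta\pmb X_j}]\preceq\pmb I+g(\theta)\E[\pmb X_j^2]$ after the mean-zero assumption kills the linear term. Operator monotonicity of the logarithm then gives $\log\E[e^{\theta\pmb X_j}]\preceq g(\theta)\E[\pmb X_j^2]$. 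Summing, using monotonicity of $\mathrm{tr}\exp$, and recalling that the matrices $\E[\pmb X_j^2]$ are positive semidefinite so that $\sigma^2=\lambda_{\max}(\sum_j\E[\pmb X_j^2])$, I arrive at
\[
P\left\{\lambda_{\max}\left(\sum_j\pmb X_j\right)\geq\tau\right\}\leq d\exp\left(-\theta\tau+g(\theta)\sigma^2\right).
\]

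Finally I would optimize over $\theta$ and split into the two regimes. Using $g(\theta)\leq(\theta^2/2)/(1-\theta R/3)$ for $0<\theta<3/R$ and minimizing the exponent produces the standard Bernstein bound $d\exp(-(\tau^2/2)/(\sigma^2+R\tau/3))$. To recover the stated two-case form, I split on $\tau$: if $\tau\leq\sigma^2/R$ then $\sigma^2+R\tau/3\leq\tfrac43\sigma^2$, which gives the branch $d\exp(-3\tau^2/(8\sigma^2))$; if $\tau>\sigma^2/R$ then $\sigma^2+R\tau/3<\tfrac43R\tau$, which gives the branch $d\exp(-3\tau/(8R))$. Thus the only genuinely hard input is Lieb's theorem in the second step, while the remaining work is a scalar calculus estimate transferred to the Loewner order followed by elementary case splitting.
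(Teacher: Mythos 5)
Your proposal is correct, and it is essentially the canonical proof of this result: the paper itself offers no proof of Lemma \ref{yiduiXj}, stating it only as a quoted result from Tropp \cite{tropp2012user}, and your argument faithfully reconstructs Tropp's matrix Laplace transform method (Chernoff bound via the trace exponential, subadditivity of matrix cumulant generating functions via Lieb's concavity theorem, the scalar estimate $e^{\theta x}\leq 1+\theta x+g(\theta)x^2$ transferred to the Loewner order, and optimization over $\theta$). Your final case split correctly recovers the stated two-branch form from the Bernstein exponent $-\tfrac{\tau^2/2}{\sigma^2+R\tau/3}$, so there is nothing to add beyond noting that, in the context of this paper, a citation to Tropp's Theorem 6.1 would suffice in place of the full derivation.
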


The following lemma presents the error bound for $\|\hat{\pmb C}-\pmb C\|$.

\begin{lemma}\label{assum1}
Let $f$ be a Lipschitz continuous function on its domain $\Omega$ such that $\|D_{\pmb z}f(\pmb v, \pmb z)\|\leq L$ for any $\pmb v, \pmb z \in \Omega$, for some constant $L$. Define the matrices:
\begin{align*}
\Tilde{\pmb C}&=\frac1{M_1}\sum_{j=1}^{M_1}\E[D_{\pmb z}fD_{\pmb z}f^T|\pmb z^{(j)}],\\
\Tilde{\pmb C}_j  &= \frac1{M_2}\sum_{i=1}^{M_2}(D_{\pmb z}f(\pmb v^{(j,i)}, \pmb z^{(j)}))(D_{\pmb z}f(\pmb v^{(j,i)}, \pmb z^{(j)}))^T,\\
\pmb C_j &= \E[D_{\pmb z}fD_{\pmb z}f^T|\pmb z^{(j)}].
\end{align*}
Assume that the events $A=\{\|\Tilde{\pmb C}-{\pmb C}\|\leq\epsilon'\|\pmb C\|\}$ and $A_j=\{\|\Tilde{\pmb C}_j-{\pmb C_j}\|\leq\epsilon''\|\pmb C_j\|\}$, $j=1,\ldots M_1$, are independent. Then for $\epsilon',\epsilon''>0$, 
\begin{equation}\label{equ:lemma42}
P\left\{\|\hat{\pmb C}-\pmb C\|\leq
\frac{\epsilon''}{M_1}\sum_{j=1}^{M_1}\|\E[D_{\pmb z}fD_{\pmb z}f^T|\pmb z^{(j)}]\|+\epsilon'\|\pmb C\|\right\}\geq(1-\delta'(M_1))\prod_{j=1}^{M_1}(1-\delta_j'(M_2)),
\end{equation}
where 
$\delta'(M_1),\delta_j'(M_2)\rightarrow 0$ as $M_1,M_2\rightarrow\infty$.
\end{lemma}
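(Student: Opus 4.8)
The plan is to insert the intermediate matrix $\tilde{\pmb C}$ between $\hat{\pmb C}$ and $\pmb C$, and to control the two resulting pieces on the events $A_j$ and $A$ respectively. The key structural observation is that, with the indexing of Eqn.\ (\ref{gMC_C}), the Monte Carlo estimator factors as $\hat{\pmb C}=\frac1{M_1}\sum_{j=1}^{M_1}\tilde{\pmb C}_j$, while by definition $\tilde{\pmb C}=\frac1{M_1}\sum_{j=1}^{M_1}\pmb C_j$. Applying the triangle inequality and then the convexity of the norm,
\[
\|\hat{\pmb C}-\pmb C\|\leq\|\hat{\pmb C}-\tilde{\pmb C}\|+\|\tilde{\pmb C}-\pmb C\|\leq\frac1{M_1}\sum_{j=1}^{M_1}\|\tilde{\pmb C}_j-\pmb C_j\|+\|\tilde{\pmb C}-\pmb C\|.
\]

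First I would work on the event $A\cap A_1\cap\ldots\cap A_{M_1}$. There the bounds $\|\tilde{\pmb C}_j-\pmb C_j\|\leq\epsilon''\|\pmb C_j\|$ and $\|\tilde{\pmb C}-\pmb C\|\leq\epsilon'\|\pmb C\|$ substitute directly into the displayed inequality and, recalling $\pmb C_j=\E[D_{\pmb z}fD_{\pmb z}f^T|\pmb z^{(j)}]$, reproduce exactly the deterministic bound appearing inside the probability of Eqn.\ (\ref{equ:lemma42}). Hence the event in Eqn.\ (\ref{equ:lemma42}) contains $A\cap\bigcap_j A_j$, and by the assumed independence of these events its probability is at least $P(A)\prod_{j=1}^{M_1}P(A_j)$. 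It then suffices to produce the lower bounds $P(A)\geq1-\delta'(M_1)$ and $P(A_j)\geq1-\delta_j'(M_2)$.

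These lower bounds come from Lemma\ \ref{yiduiXj}. For $A_j$ I would write $\tilde{\pmb C}_j-\pmb C_j=\sum_{i=1}^{M_2}\pmb X_i$ with $\pmb X_i=\frac1{M_2}\bigl(D_{\pmb z^{(j)}}f(\pmb v^{(j,i)},\pmb z^{(j)})D_{\pmb z^{(j)}}f(\pmb v^{(j,i)},\pmb z^{(j)})^T-\pmb C_j\bigr)$, which are independent, Hermitian, mean-zero matrices. Since each rank-one summand has norm at most $L^2$ by the hypothesis $\|D_{\pmb z}f\|\leq L$, one checks $\lambda_{\max}(\pmb X_i)\leq L^2/M_2$ and $\sigma^2=\|\sum_i\E[\pmb X_i^2]\|\leq L^4/M_2$. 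Taking $\tau=\epsilon''\|\pmb C_j\|$ in Lemma\ \ref{yiduiXj} then gives an exponentially small bound on $P\{\lambda_{\max}(\tilde{\pmb C}_j-\pmb C_j)\geq\tau\}$; applying the same estimate to $-(\tilde{\pmb C}_j-\pmb C_j)$ and taking a union bound controls the full spectral norm, so that $\delta_j'(M_2)$ can be taken of order $2d\exp(-cM_2)\to0$. The identical argument applied to $\tilde{\pmb C}-\pmb C=\frac1{M_1}\sum_{j=1}^{M_1}(\pmb C_j-\pmb C)$, using $\|\pmb C_j\|\leq L^2$ and $\pmb C=\E[\pmb C_j]$, produces $\delta'(M_1)$ of order $2d\exp(-c'M_1)\to0$.

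The main obstacle is the passage from Lemma\ \ref{yiduiXj}, which bounds only the largest eigenvalue $\lambda_{\max}$, to the spectral norm $\|\cdot\|$ that appears in the events $A$ and $A_j$: this forces the two-sided application of the inequality (to a matrix and its negative) together with the accompanying union bound, which is what replaces the prefactor $d$ by $2d$. A secondary but essential point is verifying that the per-sample eigenvalue bound and the variance genuinely scale like $1/M_2$ (respectively $1/M_1$), since it is precisely this scaling that drives the exponents and guarantees $\delta_j'(M_2),\delta'(M_1)\to0$ as $M_2,M_1\to\infty$.
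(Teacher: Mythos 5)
Your proposal is correct and follows essentially the same route as the paper's proof: the same triangle-inequality decomposition $\|\hat{\pmb C}-\pmb C\|\leq\frac1{M_1}\sum_j\|\tilde{\pmb C}_j-\pmb C_j\|+\|\tilde{\pmb C}-\pmb C\|$, the same two-sided application of Lemma \ref{yiduiXj} with a union bound to pass from $\lambda_{\max}$ to the spectral norm (using positive semidefiniteness to get the $L^2$ eigenvalue bound), and the same use of the independence assumption to multiply $P(A)$ and the $P(A_j)$'s. Your rescaling of the summands by $1/M_2$ (taking $\tau=\epsilon''\|\pmb C_j\|$) is equivalent to the paper's choice of $\tau=M_2\epsilon''\|\pmb C_j\|$ with unnormalized summands, and your explicit bound $\sigma^2\leq L^4/M_2$ is a minor sharpening of the paper's observation that the exponential bound vanishes as the sample size grows.
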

\begin{proof}
We have
\begin{equation}\label{equ:1stpart}
\begin{split}
&P\{\|\Tilde{\pmb C}-\pmb C\|\geq\epsilon'\|\pmb C\|\}\\
&= P\{(\lambda_{\max}(\Tilde{\pmb C}-\pmb C)\geq\epsilon'\|\pmb C\|\hspace{2pt}) \cup (\hspace{2pt} \lambda_{\max}(\pmb C-\Tilde{\pmb C})\geq\epsilon'\|\pmb C\|)\}\\
&\leq P\{\lambda_{\max}(\Tilde{\pmb C}-\pmb C)\geq\epsilon'\|\pmb C\|\} + P\{\lambda_{\max}(\pmb C-\Tilde{\pmb C})\geq\epsilon'\|\pmb C\|\}\\
&= P\left\{\lambda_{\max}\left(\sum_{j=1}^{M_1}(\E[D_{\pmb z}fD_{\pmb z}f^T|\pmb z^{(j)}]-\pmb C)\right)\geq M_1\epsilon'\|\pmb C\|\right\}+\\
&\hspace{20pt} P\left\{\lambda_{\max}\left(\sum_{j=1}^{M_1}(\pmb C-\E[D_{\pmb z}fD_{\pmb z}f^T|\pmb z^{(j)}])\right)\geq M_1\epsilon'\|\pmb C\|\right\}.
\end{split}
\end{equation}

Note that
\begin{equation}
\E[\E[D_{\pmb z}fD_{\pmb z}f^T|\pmb z^{(j)}]-\pmb C]=\E[\pmb C-\E[D_{\pmb z}fD_{\pmb z}f^T|\pmb z^{(j)}]]=0.
\end{equation}

By the assumption, we get $\|\E[D_{\pmb z}fD_{\pmb z}f^T|\pmb z]\|\leq L^2$ and $\|\pmb C\|\leq L^2$. As $\pmb C$ is positive semidefinite, we have
\[
\lambda_{\max}(\E[D_{\pmb z}fD_{\pmb z}f^T|\pmb z^{(j)}]-\pmb C)\leq\lambda_{\max}(\E[D_{\pmb z}fD_{\pmb z}f^T|\pmb z^{(j)}])=\|\E[D_{\pmb z}fD_{\pmb z}f^T|\pmb z^{(j)}]\|\leq L^2
\]
and
\[
\lambda_{\max}(\pmb C-\E[D_{\pmb z}fD_{\pmb z}f^T|\pmb z^{(j)}])\leq\lambda_{\max}(\pmb C)=\|\pmb C\|\leq L^2.
\]
Now we can apply Lemma \ref{yiduiXj}. Note that $\sigma^2$ is $\|\sum_{j=1}^{M_1}(\E[(\E[D_{\pmb z}fD_{\pmb z}f^T|\pmb z^{(j)}]-\pmb C)^2])\|$. Substituting $R$ by $L^2$, and $\tau$ by $M_1\epsilon'\|\pmb C\|=M_1\epsilon'\lambda_{\max}(C)$, we get
\begin{equation}\nonumber
P\{\|\Tilde{\pmb C}-\pmb C\|\geq\epsilon'\|\pmb C\|\}\leq
\begin{cases}
2d\exp{(-3M_1^2\epsilon'^2\lambda_{\max}(\pmb C)^2/8\sigma^2)},\epsilon'\leq\sigma^2/(M_1\lambda_{\max}(\pmb C)L^2)\\
2d\exp{(-3M_1\epsilon'\lambda_{\max}(\pmb C)/8L^2)},\epsilon'>\sigma^2/(M_1\lambda_{\max}(\pmb C)L^2).
\end{cases}
\end{equation}
The right-hand side of the above inequality goes to zero as $M_1\rightarrow\infty$ (fixing the other parameters). Setting the right-hand side of the above inequality to $\delta'(M_1)$, we rewrite the inequality as follows.
\begin{equation}
P\{\|\Tilde{\pmb C}-\pmb C\|\geq\epsilon'\|\pmb C\|\}\leq\delta'(M_1).
\end{equation}

Next consider the term $D_{\pmb z}f(\pmb v,\pmb z^{(j)})D_{\pmb z}f(\pmb v,\pmb z^{(j)})^T-\E[D_{\pmb z}fD_{\pmb z}f^T|\pmb z^{(j)}]$. Similar to the derivation of Eqn. (\ref{equ:1stpart}), we obtain
\begin{align*}
&P\left\{\left\|\Tilde{\pmb C}_j-\pmb C_j\right\|\geq\epsilon''\|\pmb C_j\|\right\}\\
&\leq P\left\{\lambda_{\max}\left(\sum_{i=1}^{M_2}(D_{\pmb z}f(\pmb v^{(j,i)}, \pmb z^{(j)}))(D_{\pmb z}f(\pmb v^{(j,i)}, \pmb z^{(j)}))^T-\pmb C_j\right)\geq M_2\epsilon''\|\pmb C_j\|\right\}+\\
&\hspace{10pt} P\left\{\lambda_{\max}\left(\pmb C_j-\sum_{i=1}^{M_2}(D_{\pmb z}f(\pmb v^{(j,i)}, \pmb z^{(j)}))(D_{\pmb z}f(\pmb v^{(j,i)}, \pmb z^{(j)}))^T\right)\geq M_2\epsilon''\|\pmb C_j\|\right\},
\end{align*}
then use Lemma \ref{yiduiXj} to conclude
\begin{equation}\nonumber
\begin{split}
&P\left\{\left\|\Tilde{\pmb C}_j-{\pmb C}_j\right\|\geq\epsilon''\|{\pmb C}_j\|\right\}\leq\delta_j'(M_2),
\end{split}
\end{equation}
where $\delta_j'(M_2)$ goes to zero as $M_2\rightarrow\infty$. Note that the probabilities above are conditional probabilities, as we have already sampled the $\pmb z^{(j)}$'s.

Using the following inequality
\begin{equation}\nonumber
\begin{split}
&\|\hat{\pmb C}-\pmb C\|\leq\|\hat{\pmb C}-\Tilde{\pmb C}\|+\|\Tilde{\pmb C}-\pmb C\|\leq\frac1{M_1}\sum_{j=1}^{M_1}\left\|\Tilde{\pmb C}_j-{\pmb C}_j\right\|+\|\Tilde{\pmb C}-\pmb C\|,
\end{split}
\end{equation}
and the assumption of independence, we obtain 
\begin{equation}
P\left\{\|\hat{\pmb C}-\pmb C\|\leq
\frac{\epsilon''}{M_1}\sum_{j=1}^{M_1}\|{\pmb C}_j\|+\epsilon'\|\pmb C\|\right\}\geq P(A,A_1,\ldots,A_{M_1}) = P(A)\prod_{j=1}^{M_1}P(A_j),
\end{equation}
which proves the theorem since $P\{A_j\}\geq 1-\delta_j'(M_2)$ and $P\{A\}\geq1-\delta'(M_1)$.
\end{proof}

\begin{remark}
\label{remark_limitone}
Note that $(1-\delta'(M_1))\prod_{j=1}^{M_1}(1-\delta_j'(M_2))\rightarrow1$ as $M_1, M_2 \rightarrow\infty$. To see that, observe for a fixed $\epsilon''$, $\delta_j'(M_2)\leq k_1e^{-k_2M_2}$, where $k_1,k_2$ are constants. If $M_1=M_2$, $\prod_{j=1}^{M_1}(1-\delta_j'(M_2))\leq(1-k_1e^{-k_2M_2})^{M_2}$, and the upper bound converges to $1$ as $M_2\rightarrow\infty$.
\end{remark}

Next we discuss the assumption $||\pmb{U}_1^T\hat{\pmb{U}}_2||\leq\epsilon$ mentioned in Remark \ref{eps_assump}.
\begin{theorem}\label{epsilonGAS}
Let $f$ be a Lipschitz continuous function on its domain $\Omega$ such that $\|D_{\pmb z}f(\pmb v, \pmb z)\|\leq L$ for any $\pmb v, \pmb z \in \Omega$, for some constant $L$. Let $\lambda_1\geq \lambda_2 \geq \ldots \geq\lambda_d$ be the eigenvalues of matrix
\[
\pmb C=\E[\E[(D_{\pmb z}f)(D_{\pmb z}f)^T|\pmb z]],
\] 
and let $\epsilon',\epsilon''>0$, such that $$\frac{\epsilon''}{M_1}\sum_{j=1}^{M_1}\|\E[D_{\pmb z}fD_{\pmb z}f^T|\pmb z^{(j)}]\|+\epsilon'\lambda_1\leq\frac{\lambda_{d_1}-\lambda_{d_1+1}}5.$$ 
Then the probability in Eqn. (\ref{equ:lemma42}) can be made arbitrarily close to one, by taking $M_1,M_2$ sufficiently large (see Remark \ref{remark_limitone}), and therefore with high probability, we have
\begin{equation}\label{eq:leqGAS}
||\pmb{U}_1^T\hat{\pmb{U}}_2||\leq\frac{4(\frac{\epsilon''}{M_1}\sum_{j=1}^{M_1}\|\E[D_{\pmb z}fD_{\pmb z}f^T|\pmb z^{(j)}]\|+\epsilon'\lambda_1)}{\lambda_{d_1}-\lambda_{d_1+1}}.
\end{equation}
\end{theorem}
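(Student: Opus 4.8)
The plan is to combine the perturbation bound for $\hat{\pmb C}-\pmb C$ from Lemma~\ref{assum1} with a standard eigenspace perturbation result (a $\sin\Theta$-type theorem) that controls the rotation of invariant subspaces in terms of the perturbation size and the spectral gap $\lambda_{d_1}-\lambda_{d_1+1}$. The quantity $\|\pmb{U}_1^T\hat{\pmb{U}}_2\|$ is precisely the sine of the largest canonical angle between the true active subspace (spanned by the columns of $\pmb{U}_1$) and the estimated inactive subspace (spanned by $\hat{\pmb{U}}_2$), so the goal is to show this sine is small whenever $\|\hat{\pmb C}-\pmb C\|$ is small relative to the gap. This is the exact analogue of the argument used by Constantine and Gleich \cite{constantine2014computing} for the active subspace method, and I would follow that route closely.

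First I would invoke Lemma~\ref{assum1}: on the event whose probability is bounded below in Eqn.~(\ref{equ:lemma42}), we have the deterministic bound
\[
\|\hat{\pmb C}-\pmb C\|\leq \frac{\epsilon''}{M_1}\sum_{j=1}^{M_1}\|\E[D_{\pmb z}fD_{\pmb z}f^T|\pmb z^{(j)}]\|+\epsilon'\|\pmb C\|.
\]
Since $\|\pmb C\|=\lambda_1$, the right-hand side is exactly the quantity appearing in the hypothesis, which I will abbreviate as $\eta$; thus $\|\hat{\pmb C}-\pmb C\|\leq\eta$ and, by assumption, $\eta\leq(\lambda_{d_1}-\lambda_{d_1+1})/5$. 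By Remark~\ref{remark_limitone} this event has probability tending to one as $M_1,M_2\to\infty$, which establishes the ``with high probability'' clause of the statement. The remaining task is purely deterministic: on this event, produce the bound in Eqn.~(\ref{eq:leqGAS}).

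Next I would apply the subspace perturbation theorem. The cleanest form states that if the perturbation $\|\hat{\pmb C}-\pmb C\|$ is bounded by a fraction of the gap, then
\[
\|\pmb{U}_1^T\hat{\pmb{U}}_2\|\leq \frac{\|\hat{\pmb C}-\pmb C\|}{\delta},
\]
where $\delta$ is the (possibly shrunken) separation between the two eigenvalue clusters after accounting for the perturbation. The factor of $5$ in the hypothesis is precisely what is needed to convert the clean gap $\lambda_{d_1}-\lambda_{d_1+1}$ into an effective gap of at least $\tfrac45(\lambda_{d_1}-\lambda_{d_1+1})$: Weyl's inequality guarantees each perturbed eigenvalue moves by at most $\eta\leq(\lambda_{d_1}-\lambda_{d_1+1})/5$, so $\hat\lambda_{d_1}-\hat\lambda_{d_1+1}\geq \lambda_{d_1}-\lambda_{d_1+1}-2\eta$, and the separation between the true cluster $\{\lambda_1,\ldots,\lambda_{d_1}\}$ and the perturbed cluster $\{\hat\lambda_{d_1+1},\ldots,\hat\lambda_d\}$ is at least $(\lambda_{d_1}-\lambda_{d_1+1})-\eta\geq\tfrac45(\lambda_{d_1}-\lambda_{d_1+1})$. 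Dividing $\eta$ by this effective gap yields
\[
\|\pmb{U}_1^T\hat{\pmb{U}}_2\|\leq \frac{\eta}{\tfrac45(\lambda_{d_1}-\lambda_{d_1+1})}=\frac{4\eta}{\lambda_{d_1}-\lambda_{d_1+1}},
\]
which is exactly Eqn.~(\ref{eq:leqGAS}) once $\eta$ is written out.

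The main obstacle I anticipate is stating the subspace perturbation theorem in a form whose hypotheses are genuinely met here and tracking the constants carefully so that the numbers $5$ and $4$ come out correctly. In particular, one must verify that the residual $\|(\hat{\pmb C}-\pmb C)\pmb{U}_1\|$ (or the appropriate off-diagonal block after rotating into the $\hat{\pmb U}$ basis) is controlled by $\|\hat{\pmb C}-\pmb C\|$, and that the gap condition of the chosen $\sin\Theta$ theorem is implied by $\eta\leq(\lambda_{d_1}-\lambda_{d_1+1})/5$. Because both $\pmb C$ and $\hat{\pmb C}$ are symmetric positive semidefinite, the Davis--Kahan framework applies directly, so the estimate reduces to the bookkeeping above; the only subtlety is confirming that the constant $4$ is attainable rather than a slightly larger value, which is why the hypothesis fixes the slack at exactly one fifth of the gap.
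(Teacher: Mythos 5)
Your proposal is correct and follows essentially the same route as the paper: the paper's proof is a one-line citation of Lemma~\ref{assum1} together with Corollary 8.1.11 of Golub and Van Loan, which is exactly the $\sin\Theta$-type subspace perturbation bound you invoke (gap condition $\|\hat{\pmb C}-\pmb C\|\leq(\lambda_{d_1}-\lambda_{d_1+1})/5$ yielding the constant $4$ over the gap). Your additional bookkeeping reconstructing where the constants $5$ and $4$ come from is a reasonable elaboration of that cited corollary rather than a different argument.
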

\begin{proof}
The result follows from Lemma \ref{assum1} and Corollary 8.1.11 of Golub and Van Loan \cite{golub2013matrix}.
\end{proof}

Theorem \ref{epsilonGAS} shows that for sufficiently large sample sizes the assumption in Theorem \ref{error2} is satisfied with high probability, which completes the error analysis for the GAS method. 

\begin{remark}
There is a similar result to Theorem \ref{epsilonGAS} for the active subspace method given by Theorem 3.13 of Constantine \cite{constantine2015active}. Theorem 3.13 proves that if we choose the constants $M_1$ (the sample size when sampling their matrix $\pmb C$) sufficiently large and $h$ sufficient small, then with high probability, 
\begin{equation}\label{eq:leqAS}
||\pmb{U}_1^T\hat{\pmb{U}}_2||\leq\frac{4\sqrt d\gamma_h(\sqrt d\gamma_h+2L)}{(1-\epsilon')\lambda_{d_1}-(1+\epsilon')\lambda_{d_1+1}}+\frac{4\epsilon'\lambda_1}{\lambda_{d_1}-\lambda_{d_1+1}},
\end{equation}
where $\lambda_1\geq \lambda_2 \geq \ldots \geq\lambda_d$ are the eigenvalues of the matrix $\pmb C=\E[(\nabla_{\pmb z}f)(\nabla_{\pmb z}f)^T]$, $\epsilon'\leq\frac{\lambda_{d_1}-\lambda_{d_1+1}}{5\lambda_1}$, $L$ is an upper bound for the norm of the gradient of $f$, and we assume $||\hat\nabla f-\nabla f||\leq\sqrt{d}\gamma_h$. The constant $\gamma_h$ is related to the step size $h$ used in estimating $\nabla f$.
One observation we make comparing the upper bounds of Eqns. (\ref{eq:leqGAS}) and (\ref{eq:leqAS}) is that for the GSA method, we expect the upper bound to reach a stable limiting value as we increase the Monte Carlo sample size $M_1$, however, for the AS method, the increment $h$ has to be chosen optimally, and the estimation of the gradient in noisy functions or in limited precision may prove to be problematic in finding a small $\gamma_h$.
\end{remark}

\section{Numerical Results}
\label{sec:numerical}
\subsection{Example 1: Normal distributed noise}
Constantine and Gleich \cite{constantine2014computing} consider the function $f_0(\pmb z)=\frac12\pmb z^T\pmb{Az},z\in[0,1]^{10}$, and compute the eigenvalues of the gradient based matrix $\hat{\pmb C}$ for the active subspace method, where $\pmb A$ is a symmetric and positive definite matrix, and the domain of $f$ is equipped with the cumulative distribution function of uniform distribution on $[0,1]^{10}$. We construct the matrix by letting $\pmb A=\pmb{Q\Lambda Q}^T$, where $\pmb Q$ is a randomly generated $d\times d$ orthogonal matrix, and $\pmb\Lambda$ is a diagonal matrix. 
Here we will add a noise term to $f$, and consider 
\begin{equation}
f(\pmb z)=f_0(\pmb z)+\epsilon,\epsilon\sim \mathcal{N}(0,\sigma^2).
\end{equation}
Our objective is to compare the robustness of the active subspace (AS) and global active subspace (GAS) methods in estimating the eigenvalues ($\Gamma_i$'s) and the first two eigenvectors of their corresponding matrices (the gradient based matrix for AS and the finite-differences based matrix for GAS defined by Eqn. (\ref{def_C})), as the variance of the noise term increases as $\sigma^2=10^{-4}, 0.01$, and $1$. 

For the AS method we use a Monte Carlo sample size of $10,000$ and a finite-difference increment of $h=10^{-1}, 10^{-3}$, and $10^{-5}$, while estimating the expected values of the approximated gradients. For the GSA method, we estimate the matrix $\pmb{C}$ using three scenarios for the two Monte Carlo sample sizes: $M_1=10,000$ and $M_2=1$; $M_1=1,000$ and $M_2=10$; $M_1=100$ and $M_2=100$. In this way we use the same number of Monte Carlo samples ($10,000$) for both AS and GAS methods, to ensure a fair comparison of their estimates. When sampling $\pmb v^{(l,i)}$'s given $\pmb z^{l}$, we use the shifted Sobol' sequences discussed in Remark \ref{rem_shiftedSobol}.

Figure \ref{testas} plots the estimated eigenvalues and the first two eigenvectors together with their no-noise values for AS, for the different increments of $h$ and variances $\sigma^2$ considered. The no-noise eigenvalues and eigenvectors in Fig. \ref{testas} mean the eigenvalues and eigenvectors of $\pmb{C}$ for the case $\epsilon=0$. In this estimation we used a Monte Carlo sample size of $100,000$, and an increment of $h=10^{-6}$, for the AS method. 

Likewise, Figure \ref{testgas} plots $\hat\Gamma_i$'s and the first two eigenvectors with their no-noise values for GAS, for the different sample sizes and variances considered. When estimating $\Gamma_i$'s of no-noise function $f_0$, we used $M_1 = 100,000$ and $M_2 = 10$. When estimating $\Gamma_i$'s of $f$, we used $M_1 = 10,000$ and $M_2 = 10$ in Algorithm \ref{alg:Gamma}.

The accuracy of both methods in estimating the eigenvalues ($\Gamma_i$'s) and eigenvectors declines as the variance of the noise term increases. However, the deterioration happens much faster in the AS method compared to the GAS method. This can be seen by comparing how close the eigenvalues are to the no-noise eigenvalues as $\sigma$ increases in Figure \ref{testas}, with how close $\Gamma_i$ values are to the no-noise $\Gamma_i$ values as $\sigma$ increases in Figure \ref{testgas}. A similar comparison of how close the eigenvectors are to the no-noise eigenvectors in AS and GAS provide similar conclusions.


The numerical results show that the best increment for AS is $h=0.1$. The best choice of sample sizes $M_1,M_2$ for GAS, with the constraint $M_1M_2=10,000$, is $M_1=10,000$ and $M_2=1$. This suggests for this specific function, it is far more important to sample the domain as thoroughly as possible, and spend less sampling resources on estimating the expected values of the finite-differences at each sample point.

\begin{figure}[htpb!] 
    \centering
    \subfloat[$\sigma=0.01$]{\includegraphics[width=0.3\textwidth]{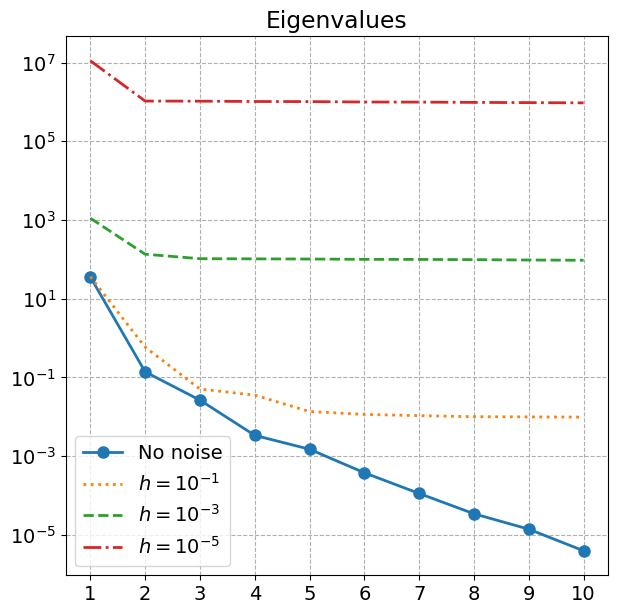}}
    \subfloat[$\sigma=0.1$]{\includegraphics[width=0.3\textwidth]{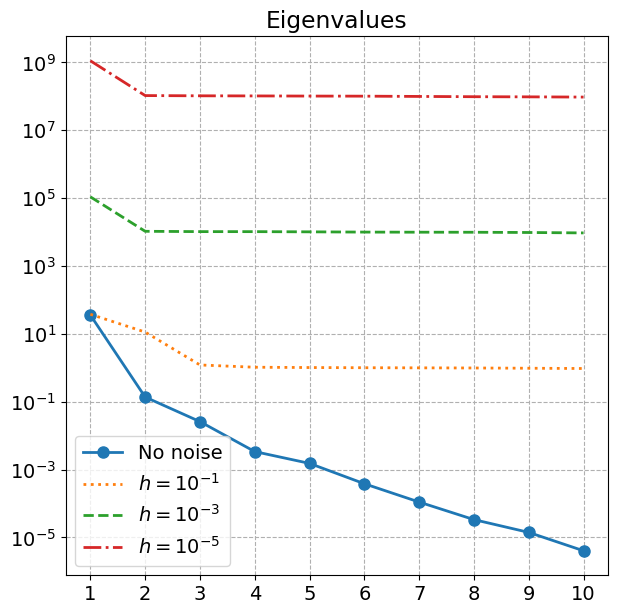}}
    \subfloat[$\sigma=1$]{\includegraphics[width=0.3\textwidth]{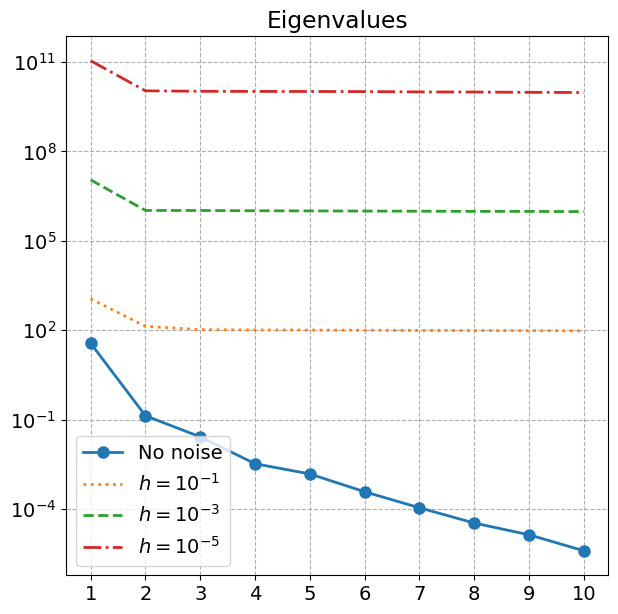}}\\
    \subfloat[$\sigma=0.01$]{\includegraphics[width=0.3\textwidth]{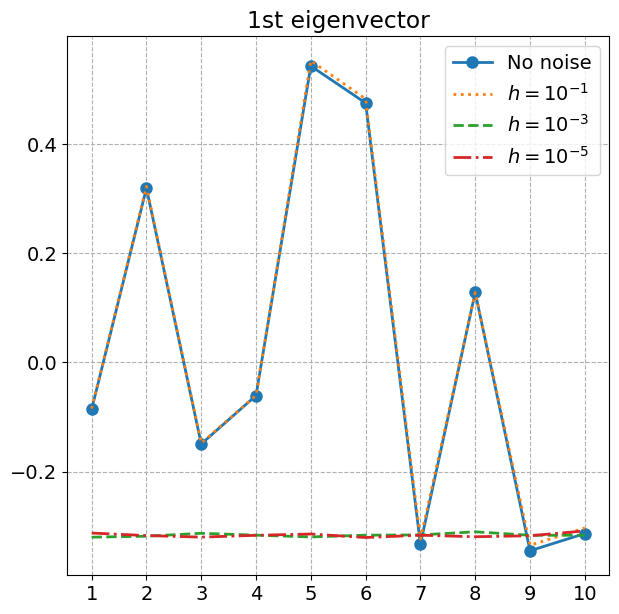}}
    \subfloat[$\sigma=0.1$]{\includegraphics[width=0.3\textwidth]{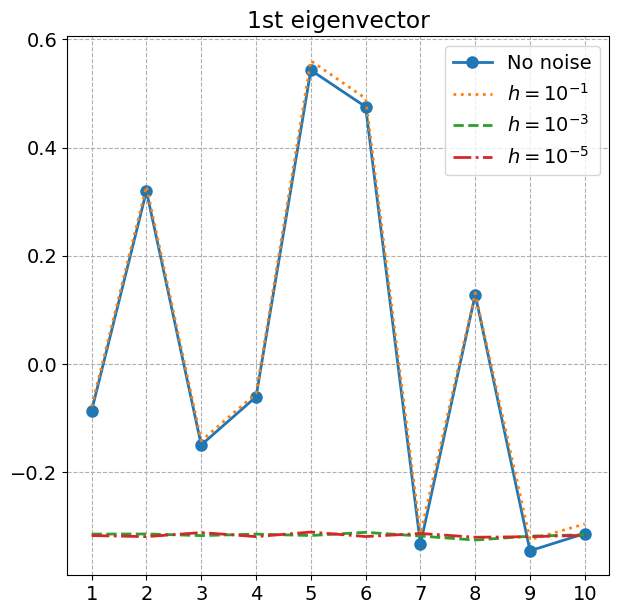}}
    \subfloat[$\sigma=1$]{\includegraphics[width=0.3\textwidth]{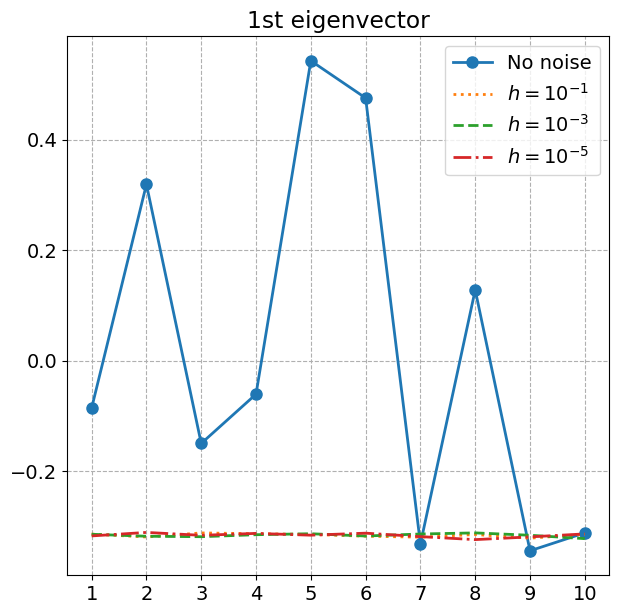}}\\
    \subfloat[$\sigma=0.01$]{\includegraphics[width=0.3\textwidth]{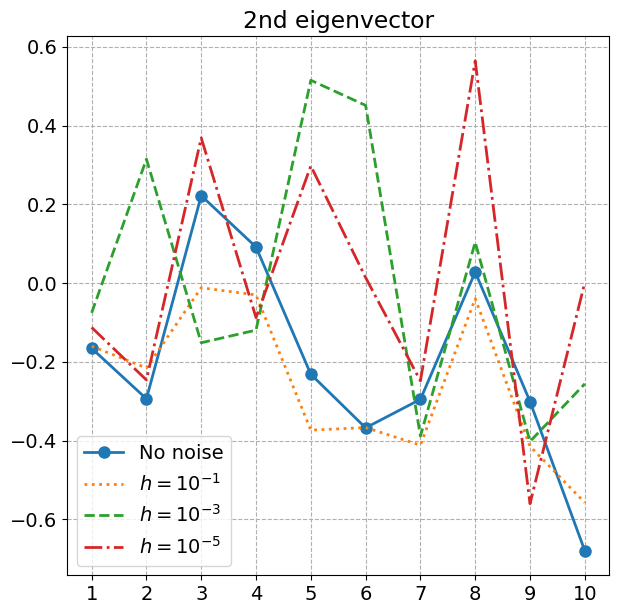}}
    \subfloat[$\sigma=0.1$]{\includegraphics[width=0.3\textwidth]{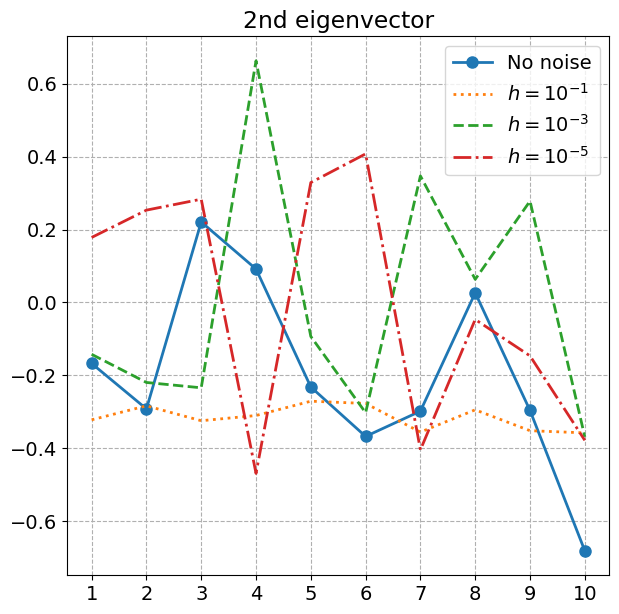}}
    \subfloat[$\sigma=1$]{\includegraphics[width=0.3\textwidth]{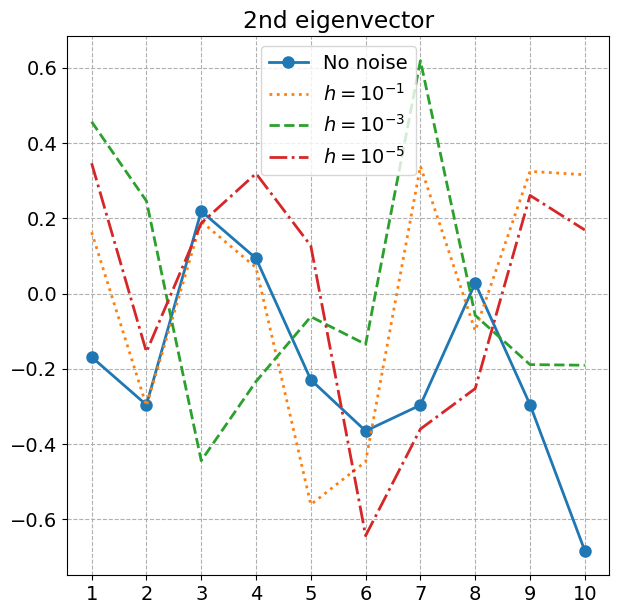}}\\
    \caption{Estimating eigenvalues and eigenvectors for Example 1: AS results.}
    \label{testas}
\end{figure}

\begin{figure}[htpb!] 
    \centering
    \subfloat[$\sigma=0.01$]{\includegraphics[width=0.3\textwidth]{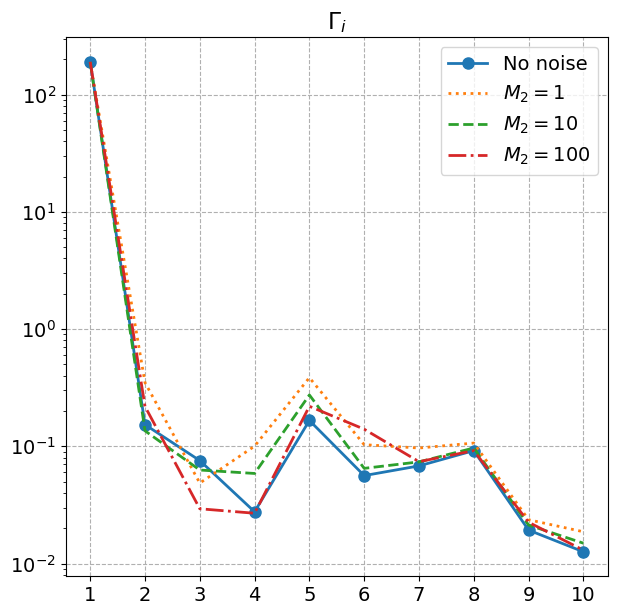}}
    \subfloat[$\sigma=0.1$]{\includegraphics[width=0.3\textwidth]{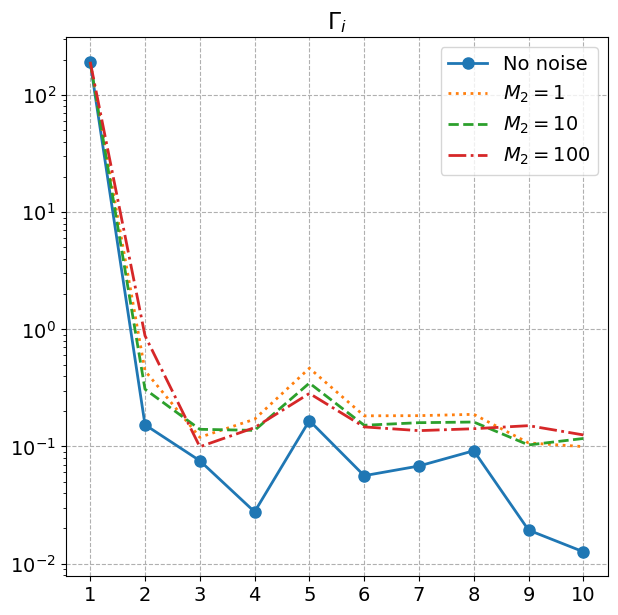}}
    \subfloat[$\sigma=1$]{\includegraphics[width=0.3\textwidth]{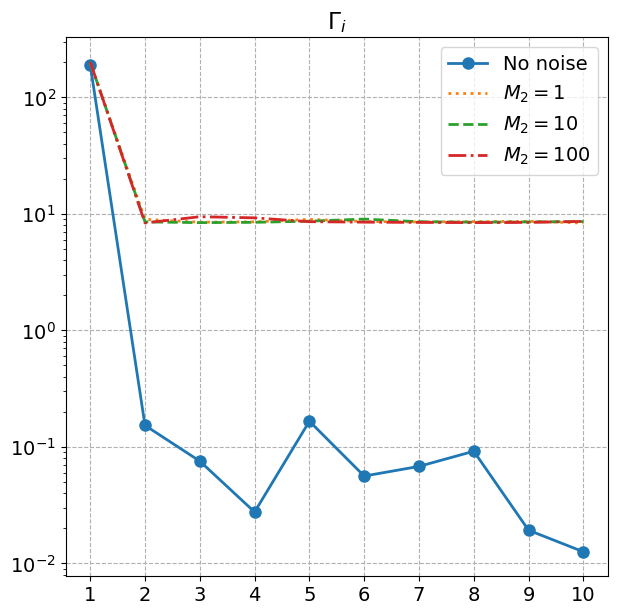}}\\
    \subfloat[$\sigma=0.01$]{\includegraphics[width=0.3\textwidth]{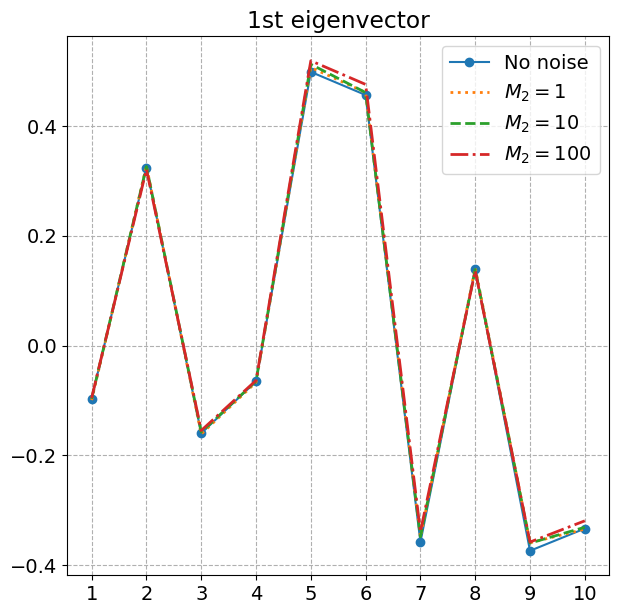}}
    \subfloat[$\sigma=0.1$]{\includegraphics[width=0.3\textwidth]{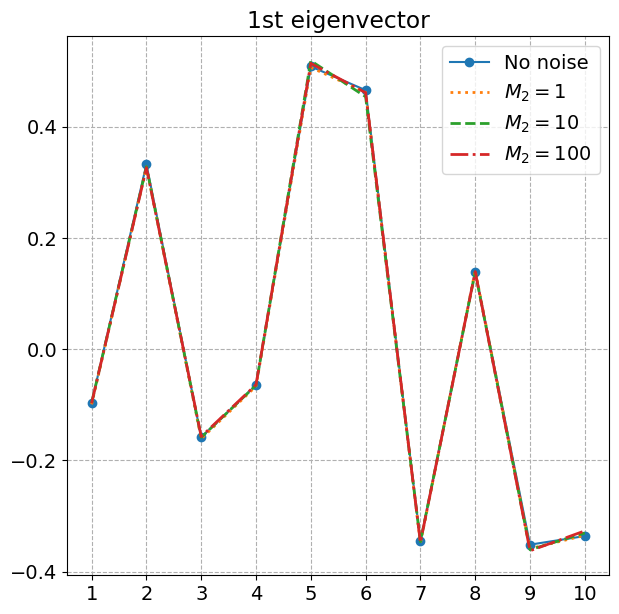}}
    \subfloat[$\sigma=1$]{\includegraphics[width=0.3\textwidth]{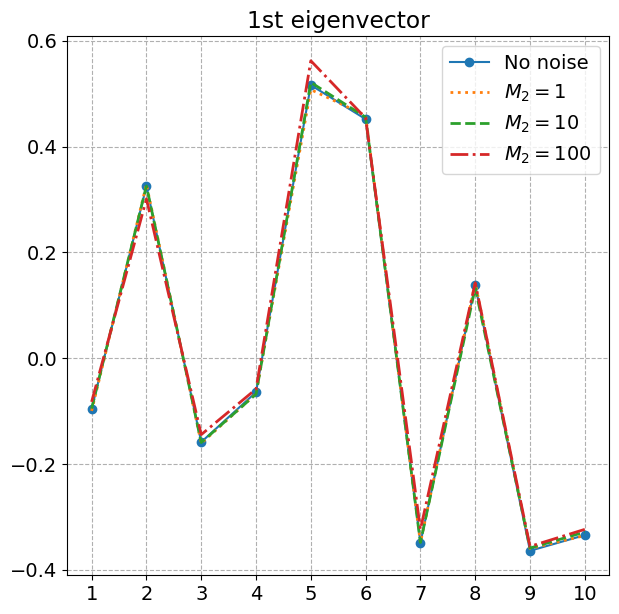}}\\
    \subfloat[$\sigma=0.01$]{\includegraphics[width=0.3\textwidth]{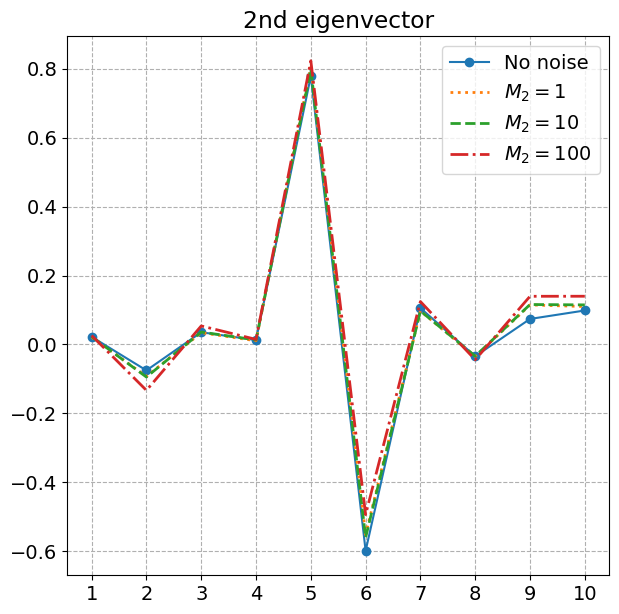}}
    \subfloat[$\sigma=0.1$]{\includegraphics[width=0.3\textwidth]{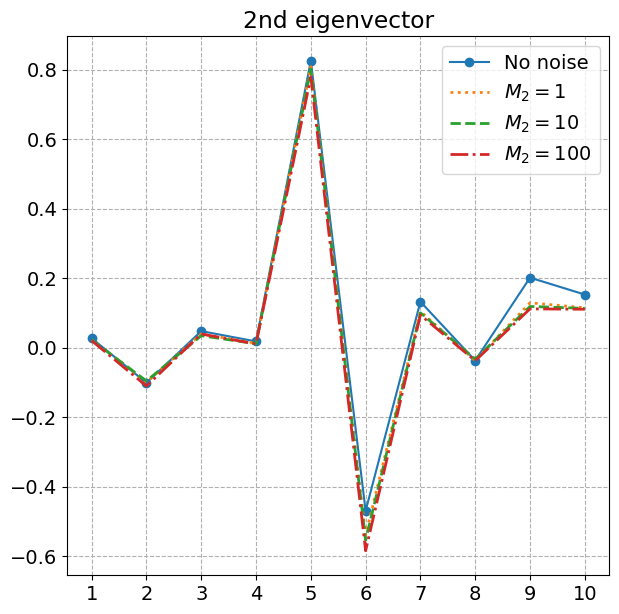}}
    \subfloat[$\sigma=1$]{\includegraphics[width=0.3\textwidth]{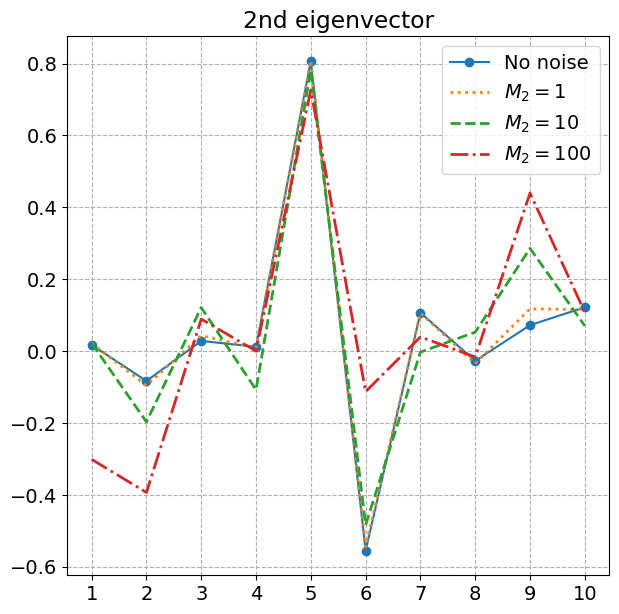}}\\
    \caption{Estimating $\Gamma_i$ and eigenvectors for Example 1: GAS results.}
    \label{testgas}
\end{figure}

In this example we compared the two methods in terms of their accuracy in estimating the eigenvalues, $\Gamma_i$ values, and eigenvectors. In the next example we will compare their accuracy when they are used as surrogate models in a computational problem.  

\subsection{Example 2: Stochastic Computer Models}\label{scc}

Stochastic computer models are models where repeated evaluations with the same inputs give different outputs. For the uncertainty quantification and global sensitivity analysis of stochastic computer models, see Fort et al. \cite{fort2021global}, Hart et al. \cite{hart2017efficient}, and Nanty et al. \cite{nanty2016sampling}.

In this example we consider a stochastic computer model of the form $Y=f(\mathbf{W},\mathbf{V})$. Here $\mathbf{W}$ represents the known uncertainty, and $\mathbf{V}$ represents the model uncertainty. The example is the pricing of an arithmetic Asian call option with discounted payoff
\begin{equation}
    g(S_{t_1},\ldots,S_{t_{d}})=e^{-rT}\max\left(\frac1 d\sum_{i=1}^dS_{t_i}-K,0\right),
\end{equation}
where $S_{t}$ is the price of the underlying at time $t$, $T=t_d$ is the expiry, $r$ is the risk-free interest rate, and $K$ is the strike price. We assume the standard Black-Scholes-Merton model for the underlying
\[
dS_t = rS_tdt + \sqrt{V_t}S_tdW^1_t,
\]
where $W^1_t$ is a standard Brownian motion and $V_t$ is the volatility at time $t$. We treat $V_t$ as the model uncertainty, represented as a function $H$ of $t,V_{<t},S_{<t} $ and $Y_{\leq t}$:
\[
V_t=H(t,V_{<t},S_{<t},Y_{\leq t}),
\]
where $Y_t$ is the unknown random process, $V_{<t}$ and $S_{<t}$ are the complete paths of $V_t$ and $S_t$ before time $t$, and $Y_{\leq t}$ is the complete path of $Y_t$ before and including time $t$.

There are several volatility models in the literature, for example, Bollerslev \cite{bollerslev1986generalized}, Hull and White \cite{hull1987pricing}, Heston \cite{heston1993closed}, Slim \cite{slim2004forecasting}, and Luo et al. \cite{luo2018neural}. We assume the true nature of the process $Y_t$ is unknown (model uncertainty), but in the numerical results, we will use the Heston model to simulate the volatility process:
\begin{equation}
\label{eq_Heston}
\begin{split}
dV_t &= \kappa(\theta-V_t)dt + \sigma\sqrt{V_t}dW^2_t,\\
dW^1_tdW^2_t &= \rho dt,
\end{split}
\end{equation}
where $W^2_t$ is a standard Brownian motion correlated with $W^1_t$. In other words, $W^2_t$ is a substitute for the unknown process $Y_t$ used in the numerical results. (We assume that the Feller condition $2\kappa\theta>\sigma^2$ is satisfied to ensure that $V_t$ is strictly positive - see Albrecher et al. \cite{albrecher2007little}).

Under the risk neutral measure, the option price at time zero is given by:
\begin{equation}
    \E[g(S_{t_1},\ldots,S_{t_{d}})]=\E[f(W^1_{t_1},W^1_{t_2}-W^1_{t_1},\ldots,W^1_{t_{d}}-W^1_{t_{d-1}})],
\end{equation}
where 
\begin{equation}\label{estheston}
f(z_1,z_2,\ldots,z_d)=\E[g(S_{t_1},\ldots,S_{t_{d}})|z_1,z_2,\ldots,z_d],
\end{equation}
and the conditional expectation is over the probability measure that describes the model uncertainty, the process $Y_t$. Note that many payoff functions have finite second-order partial derivatives almost everywhere. For a given path $W^1_{t_1}=z_1,\ldots, W^1_{t_{d}}-W^1_{t_{d-1}}=z_d$, the conditional expectation is estimated, in the numerical results, by simulating ten paths for the volatility process $V_t$ using the Heston model. This approach of conditioning on the distribution for model uncertainty to turn the stochastic computer code to a deterministic one is used in the literature, for example, by Mazo \cite{mazo2019optimal}, and Iooss and Ribatet \cite{iooss2009global}.

We apply the AS and GAS methods to the function $f(z_1,z_2,\dots,z_d)$. Each method yields a different vector $\hat{\pmb w}_1$ of length $d_1$, the vector that contains the important directions of the function. We then construct surrogate models that are functions of $\hat{\pmb w}_1$ for each method. The option price is estimated by computing the expected value of the surrogate model corresponding to AS and GAS methods.

In our numerical results, we choose $T=1,d=10,S_0=K=100, V_0=0.04, r=0.03, \kappa=2, \theta=0.04, \sigma=0.09$, and $\rho = 0.9$. For the AS method, we use a Monte Carlo sample size of $10,000$ to estimate the expectations in the matrix $C$. For the GAS method, we set $M_1=10,000$ and $M_2=1$ (see Eqn. (\ref{gMC_C})). When estimating $\Gamma_i$'s, we set $M_1=10,000$ and $M_2=10$.

Table \ref{cv_2} displays the normalized estimated eigenvalues $\hat\lambda_i/\sum_{k=1}^d\hat\lambda_k$ for the AS method, and normalized estimated values for $\hat\Gamma_i$'s, that is, $\hat\Gamma_i/ \sum_{k=1}^d\hat\Gamma_k$, for $i=1,\ldots,10$. The gap between the first and second directions are the largest in both methods, and thus we choose $d_1=1$ for both.


Figure \ref{fcv_2} plots the coefficients of the first eigenvector of $\hat{\pmb C}$ for each method. We observe that their first eigenvectors are quite different, which means the resulting subspaces will be different.

\begin{table}[htpb!] 
    \centering
    \small
    \begin{tabular}{ccccccccccc}
         & 1 & 2 & 3 & 4 & 5 & 6 & 7 & 8 & 9 & 10\\
        \hline
        AS & 0.556 & 0.056 & 0.054 & 0.051 & 0.050 & 0.049 & 0.047 & 0.047 & 0.045 & 0.045\\
        GAS & 0.981 & 0.003 & 0.003 & 0.003 & 0.002 & 0.002 &0.002 & 0.002 & 0.001 & 0.001
    \end{tabular}
    \caption{Normalized eigenvalues of AS method, and normalized $\Gamma_i$'s of GAS method for Example 2}
    \label{cv_2} 
\end{table}

\begin{figure}[htpb!] 
    \centering 
\includegraphics[scale=0.5]{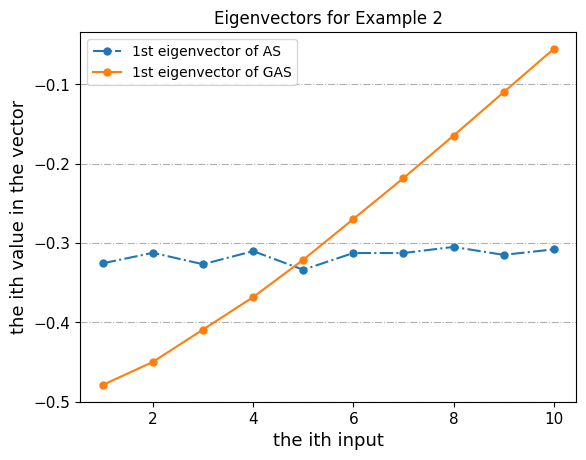} 
    \caption{Coefficients of the first eigenvector of $\hat{\pmb C}$ for Example 2} 
    \label{fcv_2} 
\end{figure}

We choose $N=10,000$ and $N_1=1$ to construct the surrogate models in Algorithm \ref{alg:gas}. To estimate the integral $\E[f(\pmb z)], \pmb z=(z_1,\ldots,z_d)$, we consider the following four estimators:

\begin{itemize}

\item Monte Carlo (MC): $\frac1N\sum_{i=1}^Nf(\pmb z^{(i)})$, $\pmb z^{(i)}\sim \mathcal{N}(\pmb 0,\pmb{I_{d}}), i=1,\ldots,N$. 

\item Polynomial chaos expansion (PCE): $\hat k_0$, where $\hat k_0$ is obtained from the estimated PCE constructed from the data $(\pmb z^{(i)},f(\pmb{z}^{(i)})),\pmb z^{(i)}\sim \mathcal{N}(\pmb 0,\pmb{I_{d}}), i=1,\ldots,N$ (see Eqns. (\ref{eq_est_pce}) and (\ref{LS})).

\item AS\_PCE: $\hat k_0$, where $\hat k_0$ is obtained from the estimated PCE constructed from the data $(\pmb z^{(i)},\hat G_{mc}(\pmb{z}^{(i)})), \pmb z^{(i)}\sim \mathcal{N}(\pmb 0,\pmb{I_{d_1}}), i=1,\ldots,N$. Here $\hat G_{mc}(\pmb{z}^{(i)})$ is computed as in Constantine \cite{constantine2015active}.

\item GAS\_PCE: $\hat k_0$, where $\hat k_0$ is obtained from the estimated PCE constructed from the data $(\pmb z^{(i)},\hat G_{mc}(\pmb{z}^{(i)})), \pmb z^{(i)}\sim \mathcal{N}(\pmb 0,\pmb{I_{d_1}}), i=1,\ldots,N$. Here $\hat G_{mc}(\pmb{z}^{(i)})$ is computed using Algorithm \ref{alg:gas}.

\end{itemize}

For each estimator, we repeat the numerical approximation $40$ times, independently, and obtain estimates $E_1,\ldots,E_{40}$. We then compute the mean square error (MSE) of the estimates:
\begin{equation}
MSE=\frac{1}{40}\sum_{i=1}^{40}(E_i-E_{true})^2,
\end{equation}
where $E_{true}$ is the result of the Monte Carlo estimator using a sample size of $N=10^6$.

We define the efficiency of each method as $1/(Time\times MSE)$, where time is the computing time (seconds) to generate the 40 estimates (including the computation of the matrices $\hat{\pmb C}$, their singular value decomposition, and construction of the surrogate models). Higher efficiency means a better method. Figure \ref{ef2} plots the logarithm of MSE and efficiency of the methods, as a function of the truncation level $p$ for the PCE based methods. The MSE of GAS\_PCE and PCE are similar, and they are significantly smaller than that of AS\_PCE. In terms of efficiency, the best method as the truncation level increases is GAS\_PCE.

\begin{figure}[htpb!] 
    \centering
    \includegraphics[scale=0.4]{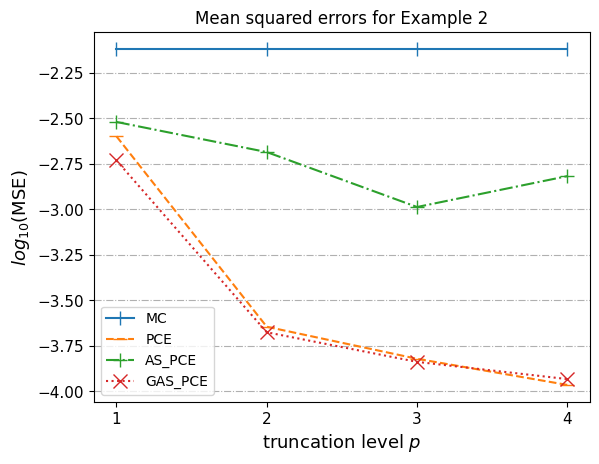}
    \includegraphics[scale=0.4]{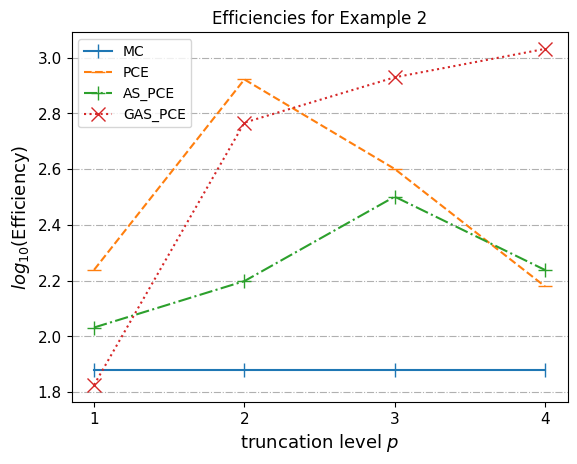}
    \caption{MSEs and efficiencies for Example 2}
    \label{ef2}
\end{figure}

Figure \ref{ef2} compares the methods for a particular choice of parameters $\sigma$ and $\rho$ (see Eqn. (\ref{eq_Heston})) that determine the randomness due to model uncertainty. To compare the methods for a variety of choices for $\sigma$ and $\rho$, we proceed as follows. We select several values for $\sigma$ and $\rho$ while keeping the other parameters as constants. We choose $T=1,d=10,S_0=K=100, V_0=0.025, r=0.03, \kappa=3, \theta=0.025, \sigma= 0.01, 0.05, 0.1, 0.15, 0.2, \rho = -0.99, -0.9, -0.5, 0, 0.5, 0.9, 0.99$. These parameter values are based on Table 5 of Escobar and Gschnaidtner \cite{escobar2016parameters}. For each set of parameters, we compute the PCE using a truncation level of $p=3$ for all the PCE based methods (PCE, AS\_PCE, GAS\_PCE). We then calculate the ratio of the MSE of GAS\_PCE to the MSE of AS\_PCE, and the ratio of the efficiency of GAS\_PCE to the efficiency of AS\_PCE. 
  
Figures \ref{stoc1}-\ref{stoc100} are heatmaps displaying the ratios of MSE and efficiency for several choices for $M_1,M_2$ for the GAS method: $M_1=10,000, M_2=1$; $M_1=1,000, M_2=10$; and $M_1=100, M_2=100$. For the AS method the Monte Carlo sample size is $10,000$. In the figures the MSE and efficiency ratios that are less than one are displayed in blue, and ratios that are larger than one in red. Therefore blue indicates the GAS method performs better than AS for the corresponding parameter choices. 

When $M_2$ is 1, 10 or 100, the GAS method consistently has lower MSE, and better efficiency, for every choice of parameters, and the improvements are quite significant for some choices. 

\begin{figure}[htpb!] 
    \centering
    \includegraphics[scale=0.45]{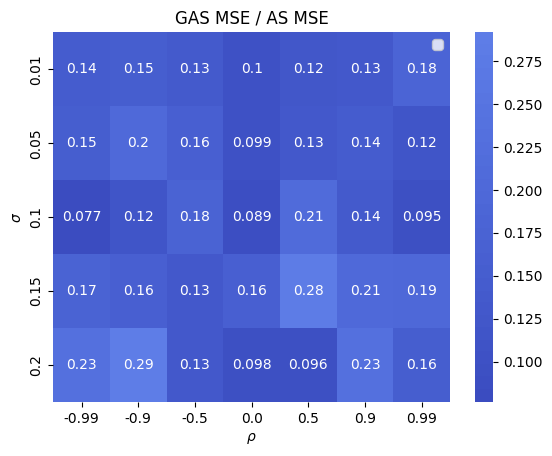}
    \includegraphics[scale=0.45]{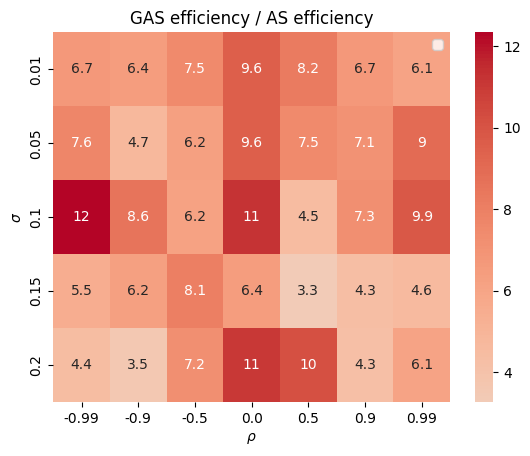}
    \caption{MSE and efficiency ratios for $M_2=1$}
`   \label{stoc1} 
\end{figure}

\begin{figure}[htpb!] 
    \centering
    \includegraphics[scale=0.45]{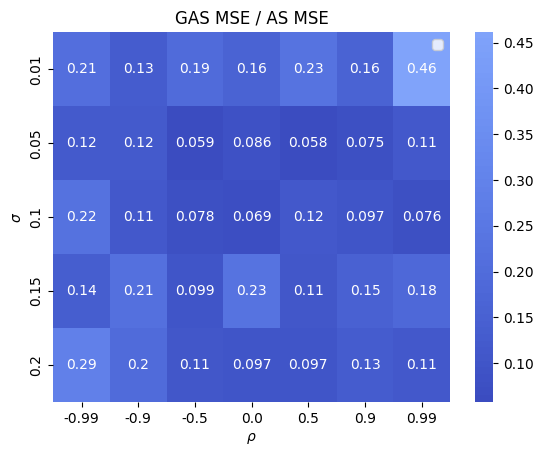}
    \includegraphics[scale=0.45]{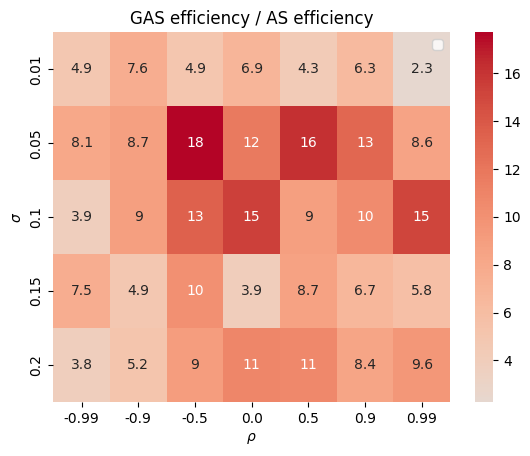}
    \caption{MSE and efficiency ratios for $M_2=10$}
`   \label{stoc10} 
\end{figure}

\begin{figure}[htpb!] 
    \centering
    \includegraphics[scale=0.45]{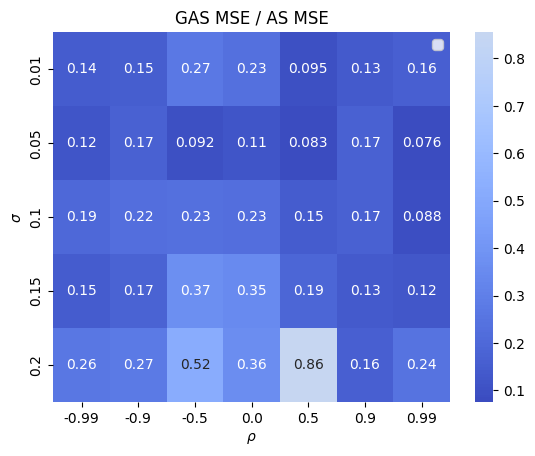}
    \includegraphics[scale=0.45]{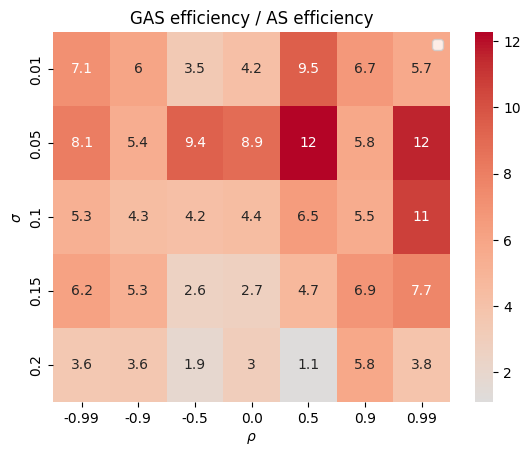}
    \caption{MSE and efficiency ratios for $M_2=100$}
`   \label{stoc100} 
\end{figure}

\subsection{Example 3: Discontinuous Functions}

The AS method requires the existence of the partial derivatives of the function: the entries of the matrix $\pmb C$ are expected values of partial derivatives. The GAS method does not require the existence of partial derivatives, since the entries of the matrix $\pmb C$ for GAS are expected values of finite-differences. However, the error analysis of the GAS method as well as AS method assumes the existence of partial derivatives. In this example we apply GAS to a discontinuous function to further examine the robustness of the method when assumptions used in its theoretical analysis are violated.

Hoyt and Owen \cite{hoyt2020mean} consider the mean dimension of ridge functions, including the cases when the ridge functions are discontinuous. We consider the following example from their paper
\begin{equation}
f(\pmb z) = \pmb 1\{\theta^T\pmb z>0\}, \pmb z\in\mathbb R^d,
\end{equation}
where $\pmb 1 \{\theta^T\pmb z>0\}$ equals 1 if $\theta^T\pmb z>0$ and zero otherwise, and we assume the domain of $f$ is equipped with the CDF of a $d$-dimensional standard normal random variable. 

We consider two cases: $d=10$ and $d=20$. In each case, we choose $\theta$ by randomly generating a $d$-dimensional vector from the multivariate standard normal distribution. We then estimate $\Gamma_i$'s and the first eigenvector of $\hat{\pmb C}$ for GAS method. For the GAS method, we take $M_2 = 1, 10, 100$ while maintaining $M_1M_2=10,000$. Figure \ref{disc_fig} plots the first estimated eigenvector for each method, together with the vector $\theta$, for $d=10$ and $d=20$. In all cases, the estimated eigenvectors are able to capture $\theta$; the direction of the jump in the function.



\begin{figure}[htpb!] 
    \centering 
    \subfloat[$d=10$]{\includegraphics[width=0.45\textwidth]{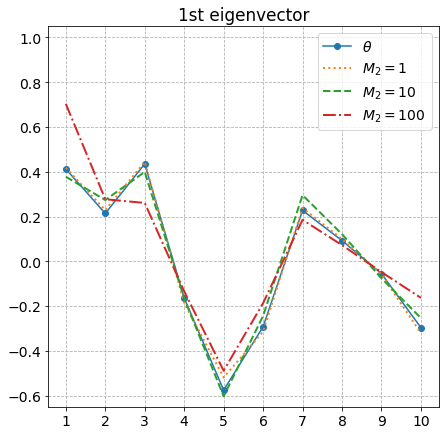}}
    \subfloat[$d=20$]{\includegraphics[width=0.45\textwidth]{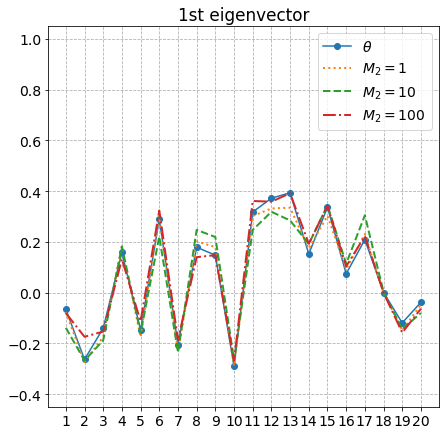}}\\
    \caption{Coefficients of the first eigenvector of $\hat{\pmb C}$ for GAS method, Example 3} 
    \label{disc_fig} 
\end{figure}

\subsection{Example 4: An Ebola Model}
In this example we consider the global sensitivity analysis of a modified SEIR model for the spread of Ebola introduced in Diaz et al. \cite{diaz2018modified}. The model is a system of seven differential equations which describe the dynamics of disease within a population. The output we are interested in is the basic reproduction number $R_0$, and Diaz et al. \cite{diaz2018modified} shows that $R_0$ is
\begin{equation}
R_0 = \frac{\beta_1 + \frac{\beta_2\rho_1\gamma_1}{\omega} + \frac{\beta_3}{\gamma_2}\psi}{\gamma_1 + \psi},
\end{equation}
where $\beta_1, \beta_2, \beta_3, \rho_1, \gamma_1, \gamma_2, \omega$ and $\psi$ are the model parameters. Table \ref{ebola_para}, which is from \cite{diaz2018modified}, displays the distributions of the parameters, obtained using data from the country Liberia.

\begin{table}[htpb!] 
    \centering
    \begin{tabular}{ccccc}
        Parameter & $\beta_1$ & $\beta_2$ & $\beta_3$ & $\rho_1$\\
        \hline
        Liberia & $U(.1, .4)$ & $U(.1, .4)$ & $U(.05, .2)$ & $U(.41, 1)$\\
        

        \hline
        Parameter & $\gamma_1$ & $\gamma_2$ & $\omega$ & $\psi$\\
        \hline
        Liberia & $U(.0276, .1702)$ & $U(.081, .21)$ & $U(.25, .5)$ & $U(.0833, .7)$
    \end{tabular}
    \caption{Parameter distributions in Ebola model}
    \label{ebola_para} 
\end{table}

The problem can be rewritten as $f(\pmb z)=R_0$, where $z_i\sim U(0,1),i=1,\dots,8$. We apply the AS and GAS methods to $f$ using a Monte Carlo sample size of $10,000$ and $h=0.001$ for AS, and $M_1=1000$ and $M_2=10$ for GAS. In this example, for the GAS method, we consider the eigenvalues and eigenvectors of the corresponding $\pmb C$ (as opposed to estimating $\Gamma_i's$) and compare them directly with the eigenvalues and eigenvectors of the $\pmb C$ for the AS method. Tables \ref{lib_table} displays the normalized eigenvalues and the first eigenvector of $\hat{\pmb C}$ for AS and GAS methods.

\begin{table}[htpb!] 
    \centering
    \footnotesize
    \begin{tabular}{ccccccccc}
        Normalized eigenvalues & 1 & 2 & 3 & 4 & 5 & 6 & 7 & 8\\
        \hline
        AS & 0.769 & 0.198 & 0.018 & 0.010 & 0.004 &  0.  &  0.  &  0.\\
        GAS & 0.797 & 0.132 & 0.040 & 0.019 & 0.008 & 0.003   &  0.001  &  0.\\
        \hline
        $1$st eigenvector & 1 & 2 & 3 & 4 & 5 & 6 & 7 & 8\\
        \hline
        AS & 0.385 & 0.062 & 0.340 & 0.043 & -0.252 & -0.298 & -0.038 & -0.759 \\
        GAS & 0.464 & 0.077 & 0.490 & 0.055 & -0.251 & -0.389 & -0.046& -0.565
    \end{tabular}
    \caption{Comparison of results for Liberia}
    \label{lib_table} 
\end{table}


Figures \ref{ebola1AS} and \ref{ebola1} plot the eigenvalues, eigenvectors, and sufficient summary plots for the first active variable $\hat{\pmb u}_1^T\pmb z$, and the first and second active variable $\hat{\pmb u}_2^T\pmb z$ together, for the AS and GAS methods, using the data from Liberia only (data from Sierra Leone give similar conclusions). Here $\hat{\pmb u}_1$ and $\hat{\pmb u}_2$ are the first two eigenvectors, that is the first two columns of $\hat{\pmb{U}}$. The sufficient summary plot for the first active variable is obtained by plotting the output $R_0$ for a sample of $2,000$ randomly generated inputs $\hat{\pmb u}_1^T\pmb z^{(i)},i=1,\ldots,2000.$ The sufficient summary plot for the first and second active variables is also based on 2,000 random samples for each variable. We observe that the first eigenvector for AS and GAS methods are very similar. Their sufficient summary plots show that the output and the first active variable have a similar quadratic relationship, and so as the output and the first two active variables. \footnote{The domain of the function for the AS method as implemented in \cite{diaz2018modified} and the GitHub page by Paul Constantine \url{https://github.com/paulcon/as-data-sets/blob/master/Ebola/Ebola.ipynb} is $(-1,1)^d$. In our results the domain for AS and GAS is $(0,1)^d$. Therefore we cannot compare our AS results with those reported by \cite{diaz2018modified} and Paul Constantine directly.} We conclude both methods give very similar results in this example, while the computational cost on function evaluations and number of operations on matrices are also similar. 

\begin{figure}[htpb!] 
    \centering
    \subfloat[Eigenvalues]{\includegraphics[width=0.4\textwidth]{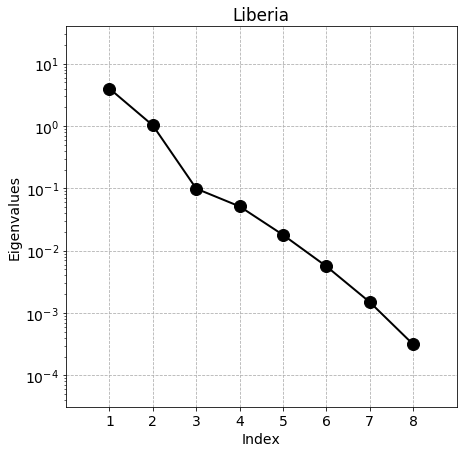}}
    \subfloat[Sufficient summary plot 1]{\includegraphics[width=0.4\textwidth]{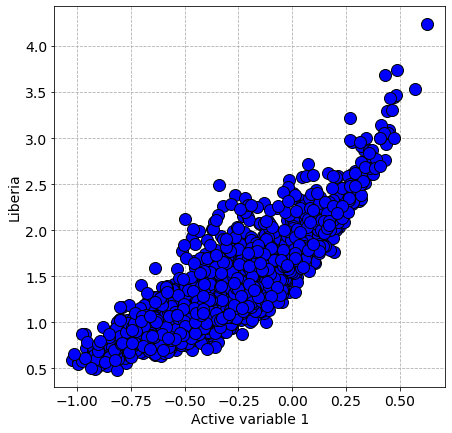}}\\
    \subfloat[Sufficient summary plot 2]{\includegraphics[width=0.4\textwidth]{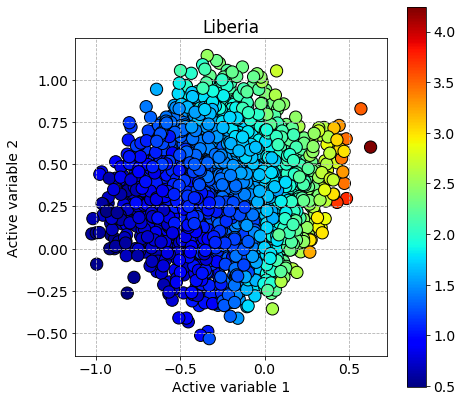}}
    \subfloat[1st eigenvector]{\includegraphics[width=0.4\textwidth]{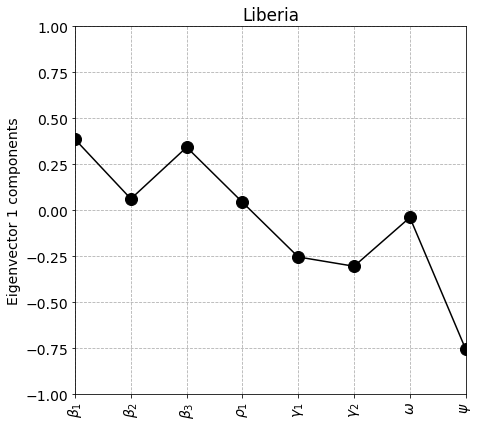}}\\
    \caption{AS results for Ebola model (Liberia)}
    \label{ebola1AS}
\end{figure}

\begin{figure}[htpb!] 
    \centering
    \subfloat[Eigenvalues]{\includegraphics[width=0.4\textwidth]{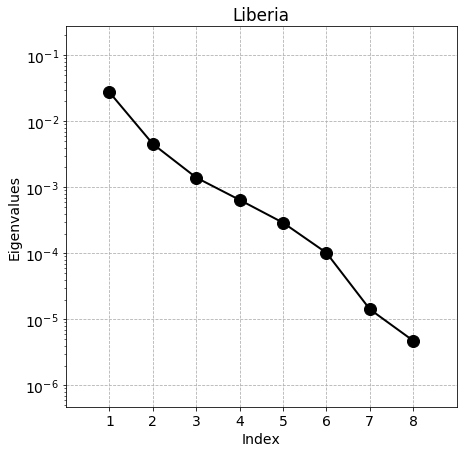}}
    \subfloat[Sufficient summary plot 1]{\includegraphics[width=0.4\textwidth]{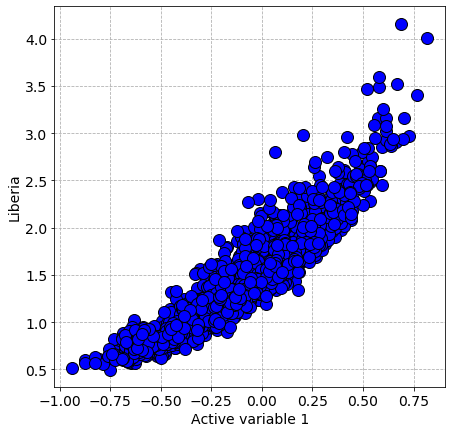}}\\
    \subfloat[Sufficient summary plot 2]{\includegraphics[width=0.4\textwidth]{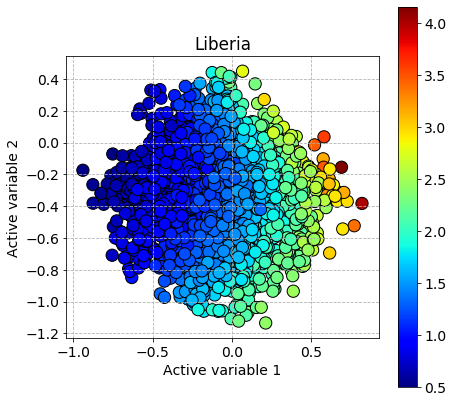}}
    \subfloat[1st eigenvector]{\includegraphics[width=0.4\textwidth]{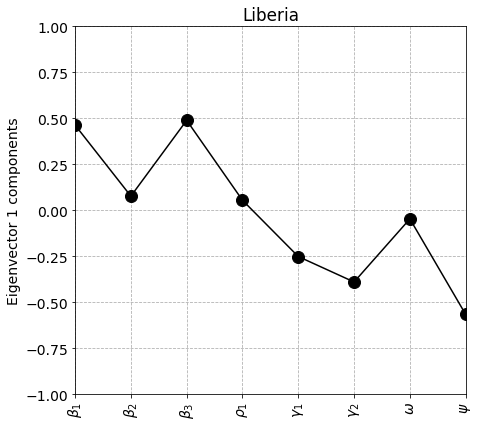}}\\
    \caption{GAS results for Ebola model (Liberia)}
    \label{ebola1}
\end{figure}

\section{Conclusions}
\label{sec:conc}

The global active subspace method uses a global measure for change in function values, the expected value of first-order finite differences, in finding the important directions along which the function changes the most. We have established the error analysis of the method and presented numerical results comparing the method with the active subspace method. Like the active subspace method, the effectiveness of the global active subspace method depends on how fast the $\Gamma_i$'s decay. In the numerical results we have observed that the method is superior to the active subspace method when it is difficult to estimate the gradients, such as in the presence of noise. The method was effective when applied to a non-differentiable function, even though its error analysis requires differentiability. We also observed that the method gave comparable results to the active subspace method when the estimation of the gradient was not problematic, like in the Ebola model. We believe the global active subspace method is a promising dimension reduction method for complex models.

\section*{Acknowledgments}
We thank Richard Oberlin for helpful discussions. 

\bibliographystyle{siamplain}
\bibliography{references}

\ifx \manfnt \undefined \font\manfnt=logo10 \fi\ifx \METAFONT \undefined \def
  \METAFONT {{\manfnt META}\-{\manfnt FONT}\spacefactor1000 } \fi\ifx \MF
  \undefined \let \MF=\METAFONT \fi\ifx \POSTSCRIPT \undefined \def \POSTSCRIPT
  {{\scshape Post}\-{\scshape Script}\spacefactor1000 } \fi\ifx \MP \undefined
  \def \MP {{\manfnt META}\-{\manfnt POST}\spacefactor1000 } \fi\ifx \noopsort
  \undefined \def \noopsort#1{} \fi\ifx \emdash \undefined \def \emdash{---}
  \fi
\begin{thebibliography}{10}

\bibitem{albrecher2007little}
{\sc H.~Albrecher, P.~Mayer, W.~Schoutens, and J.~Tistaert}, {\em The little
  {H}eston trap}, Wilmott Magazine,  (2007), pp.~83--92.

\bibitem{bollerslev1986generalized}
{\sc T.~Bollerslev}, {\em Generalized autoregressive conditional
  heteroskedasticity}, Journal of Econometrics, 31 (1986), pp.~307--327.

\bibitem{coleman2019gradient}
{\sc K.~D. Coleman, A.~Lewis, R.~C. Smith, B.~Williams, M.~Morris, and
  B.~Khuwaileh}, {\em Gradient-free construction of active subspaces for
  dimension reduction in complex models with applications to neutronics},
  SIAM/ASA Journal on Uncertainty Quantification, 7 (2019), pp.~117--142.

\bibitem{constantine2015active}
{\sc P.~G. Constantine}, {\em Active subspaces: Emerging ideas for dimension
  reduction in parameter studies}, SIAM, 2015.

\bibitem{constantine2014active}
{\sc P.~G. Constantine, E.~Dow, and Q.~Wang}, {\em Active subspace methods in
  theory and practice: applications to kriging surfaces}, SIAM Journal on
  Scientific Computing, 36 (2014), pp.~A1500--A1524.

\bibitem{constantine2015computing}
{\sc P.~G. Constantine, A.~Eftekhari, and M.~B. Wakin}, {\em Computing active
  subspaces efficiently with gradient sketching}, in 2015 IEEE 6th
  International Workshop on Computational Advances in Multi-Sensor Adaptive
  Processing (CAMSAP), IEEE, 2015, pp.~353--356.

\bibitem{constantine2015exploiting}
{\sc P.~G. Constantine, M.~Emory, J.~Larsson, and G.~Iaccarino}, {\em
  Exploiting active subspaces to quantify uncertainty in the numerical
  simulation of the {H}yshot {II} scramjet}, Journal of Computational Physics,
  302 (2015), pp.~1--20.

\bibitem{constantine2014computing}
{\sc P.~G. Constantine and D.~F. Gleich}, {\em Computing active subspaces with
  {M}onte {C}arlo}, arXiv preprint arXiv:1408.0545,  (2014).

\bibitem{cui2020active}
{\sc C.~Cui, K.~Zhang, T.~Daulbaev, J.~Gusak, I.~Oseledets, and Z.~Zhang}, {\em
  Active subspace of neural networks: Structural analysis and universal
  attacks}, SIAM Journal on Mathematics of Data Science, 2 (2020),
  pp.~1096--1122.

\bibitem{demo2021supervised}
{\sc N.~Demo, M.~Tezzele, and G.~Rozza}, {\em A supervised learning approach
  involving active subspaces for an efficient genetic algorithm in
  high-dimensional optimization problems}, SIAM Journal on Scientific
  Computing, 43 (2021), pp.~B831--B853.

\bibitem{diaz2018modified}
{\sc P.~Diaz, P.~Constantine, K.~Kalmbach, E.~Jones, and S.~Pankavich}, {\em A
  modified {SEIR} model for the spread of {E}bola in {W}estern {A}frica and
  metrics for resource allocation}, Applied Mathematics and Computation, 324
  (2018), pp.~141--155.

\bibitem{escobar2016parameters}
{\sc M.~Escobar and C.~Gschnaidtner}, {\em Parameters recovery via calibration
  in the {H}eston model: A comprehensive review}, Wilmott, 86 (2016),
  pp.~60--81.

\bibitem{fort2021global}
{\sc J.-C. Fort, T.~Klein, and A.~Lagnoux}, {\em Global sensitivity analysis
  and {W}asserstein spaces}, SIAM/ASA Journal on Uncertainty Quantification, 9
  (2021), pp.~880--921.

\bibitem{golub2013matrix}
{\sc G.~H. Golub and C.~F. Van~Loan}, {\em Matrix computations}, JHU press,
  2013.

\bibitem{hampton2015coherence}
{\sc J.~Hampton and A.~Doostan}, {\em Coherence motivated sampling and
  convergence analysis of least squares polynomial chaos regression}, Computer
  Methods in Applied Mechanics and Engineering, 290 (2015), pp.~73--97.

\bibitem{hart2017efficient}
{\sc J.~L. Hart, A.~Alexanderian, and P.~A. Gremaud}, {\em Efficient
  computation of {S}obol' indices for stochastic models}, SIAM Journal on
  Scientific Computing, 39 (2017), pp.~A1514--A1530.

\bibitem{heston1993closed}
{\sc S.~L. Heston}, {\em A closed-form solution for options with stochastic
  volatility with applications to bond and currency options}, The Review of
  Financial Studies, 6 (1993), pp.~327--343.

\bibitem{hoyt2020mean}
{\sc C.~R. Hoyt and A.~B. Owen}, {\em Mean dimension of ridge functions}, SIAM
  Journal on Numerical Analysis, 58 (2020), pp.~1195--1216.

\bibitem{hull1987pricing}
{\sc J.~Hull and A.~White}, {\em The pricing of options on assets with
  stochastic volatilities}, The Journal of Finance, 42 (1987), pp.~281--300.

\bibitem{iooss2009global}
{\sc B.~Iooss and M.~Ribatet}, {\em Global sensitivity analysis of computer
  models with functional inputs}, Reliability Engineering \& System Safety, 94
  (2009), pp.~1194--1204.

\bibitem{jefferson2015active}
{\sc J.~L. Jefferson, J.~M. Gilbert, P.~G. Constantine, and R.~M. Maxwell},
  {\em Active subspaces for sensitivity analysis and dimension reduction of an
  integrated hydrologic model}, Computers \& Geosciences, 83 (2015),
  pp.~127--138.

\bibitem{kubler2019self}
{\sc F.~Kubler, S.~Scheidegger, et~al.}, {\em Self-justified equilibria:
  Existence and computation}, in 2018 Meeting Papers, no.~694, Society for
  Economic Dynamics, 2018.

\bibitem{lewis2016gradient}
{\sc A.~Lewis, R.~Smith, and B.~Williams}, {\em Gradient free active subspace
  construction using {M}orris screening elementary effects}, Computers \&
  Mathematics with Applications, 72 (2016), pp.~1603--1615.

\bibitem{li2019surrogate}
{\sc J.~Li, J.~Cai, and K.~Qu}, {\em Surrogate-based aerodynamic shape
  optimization with the active subspace method}, Structural and
  Multidisciplinary Optimization, 59 (2019), pp.~403--419.

\bibitem{liu2022pre}
{\sc S.~Liu and A.~B. Owen}, {\em Preintegration via active subspace}, SIAM
  Journal on Numerical Analysis, 61 (2023), pp.~495--514.

\bibitem{lukaczyk2014active}
{\sc T.~W. Lukaczyk, P.~Constantine, F.~Palacios, and J.~J. Alonso}, {\em
  Active subspaces for shape optimization}, in 10th {AIAA} {M}ultidisciplinary
  {D}esign {O}ptimization {C}onference, 2014, p.~1171.

\bibitem{luo2018neural}
{\sc R.~Luo, W.~Zhang, X.~Xu, and J.~Wang}, {\em A neural stochastic volatility
  model}, in Proceedings of the AAAI Conference on Artificial Intelligence,
  vol.~32, 2018.

\bibitem{ma2020support}
{\sc H.~Ma, E.-P. Li, A.~C. Cangellaris, and X.~Chen}, {\em Support vector
  regression-based active subspace ({SVR}-{AS}) modeling of high-speed links
  for fast and accurate sensitivity analysis}, IEEE Access, 8 (2020),
  pp.~74339--74348.

\bibitem{mazo2019optimal}
{\sc G.~Mazo}, {\em An optimal tradeoff between explorations and repetitions in
  global sensitivity analysis for stochastic computer models}, Preprint
  hal-02113448,  (2019).

\bibitem{nanty2016sampling}
{\sc S.~Nanty, C.~Helbert, A.~Marrel, N.~P{\'e}rot, and C.~Prieur}, {\em
  Sampling, metamodelling and sensitivity analysis of numerical simulators with
  functional stochastic inputs}, HAL, 2016 (2016).

\bibitem{navaneeth2022surrogate}
{\sc N.~Navaneeth and S.~Chakraborty}, {\em Surrogate assisted active subspace
  and active subspace assisted surrogate—a new paradigm for high dimensional
  structural reliability analysis}, Computer Methods in Applied Mechanics and
  Engineering, 389 (2022), p.~114374.

\bibitem{russi2010uncertainty}
{\sc T.~M. Russi}, {\em Uncertainty quantification with experimental data and
  complex system models}, University of California, Berkeley, 2010.

\bibitem{saltelli2010variance}
{\sc A.~Saltelli, P.~Annoni, I.~Azzini, F.~Campolongo, M.~Ratto, and
  S.~Tarantola}, {\em Variance based sensitivity analysis of model output.
  {D}esign and estimator for the total sensitivity index}, Computer Physics
  Communications, 181 (2010), pp.~259--270.

\bibitem{saltelli2008global}
{\sc A.~Saltelli, M.~Ratto, T.~Andres, F.~Campolongo, J.~Cariboni, D.~Gatelli,
  M.~Saisana, and S.~Tarantola}, {\em Global sensitivity analysis: the primer},
  John Wiley \& Sons, 2008.

\bibitem{slim2004forecasting}
{\sc C.~Slim}, {\em Forecasting the volatility of stock index returns: A
  stochastic neural network approach}, in International Conference on
  Computational Science and Its Applications, Springer, 2004, pp.~935--944.

\bibitem{sobol1967distribution}
{\sc I.~Sobol}, {\em On the distribution of points in a cube and the
  approximate evaluation of integrals}, USSR Computational Mathematics and
  Mathematical Physics, 7 (1967), pp.~86--112.

\bibitem{sobol2001global}
{\sc I.~M. Sobol'}, {\em Global sensitivity indices for nonlinear mathematical
  models and their {M}onte {C}arlo estimates}, Mathematics and {C}omputers in
  {S}imulation, 55 (2001), pp.~271--280.

\bibitem{tropp2012user}
{\sc J.~A. Tropp}, {\em User-friendly tail bounds for sums of random matrices},
  Foundations of Computational Mathematics, 12 (2012), pp.~389--434.

\bibitem{wycoff2021sequential}
{\sc N.~Wycoff, M.~Binois, and S.~M. Wild}, {\em Sequential learning of active
  subspaces}, Journal of Computational and Graphical Statistics, 30 (2021),
  pp.~1224--1237.

\bibitem{xiu2002wiener}
{\sc D.~Xiu and G.~E. Karniadakis}, {\em The {W}iener--{A}skey polynomial chaos
  for stochastic differential equations}, SIAM Journal on Scientific Computing,
  24 (2002), pp.~619--644.

\bibitem{yan2021accelerating}
{\sc H.~Yan, C.~Hao, J.~Zhang, W.~A. Illman, G.~Lin, and L.~Zeng}, {\em
  Accelerating groundwater data assimilation with a gradient-free active
  subspace method}, Water Resources Research, 57 (2021), p.~e2021WR029610.

\bibitem{zhou2021active}
{\sc T.~Zhou and Y.~Peng}, {\em Active learning and active subspace enhancement
  for {PDEM}-based high-dimensional reliability analysis}, Structural Safety,
  88 (2021), p.~102026.

\end{thebibliography}
\end{document}